\documentclass{amsart}
\usepackage[utf8]{inputenc}
\usepackage{amsmath, amssymb, amsfonts, tikz, hyperref, url}
\usepackage{amsthm}
\usepackage{tikz-cd}
\usepackage{cleveref}
\usepackage{comment}
\newtheorem{theorem}{Theorem}[section]
\newtheorem{lemma}[theorem]{Lemma}
\newtheorem{prop}[theorem]{Proposition}

\theoremstyle{definition}
\newtheorem{definition}[theorem]{Definition}

\theoremstyle{remark}
\newtheorem{remark}[theorem]{Remark}

\DeclareMathOperator{\GL}{GL}

\DeclareMathOperator{\pr}{pr}

\DeclareMathOperator{\Spec}{Spec}

\renewcommand{\AA}{\mathbb{A}}

\newcommand{\FF}{\mathcal{F}}

\newcommand{\CC}{\mathbb{C}}

\newcommand{\EE}{\mathcal{E}}

\newcommand{\OO}{\mathcal{O}}

\newcommand{\LL}{\mathcal{L}}

\newcommand{\PP}{\mathbb{P}}

\begin{document}

\title{A Serre Relation in the  $K$-theoretic Hall algebra of surfaces}
\date{}
\author{Junyao Peng, Yu Zhao}
\maketitle

\begin{abstract}
    We prove a Serre relation in the $K$-theoretic Hall algebra of surfaces
    constructed by Kapranov-Vasserot \cite{kapranov2019cohomological} and the second author \cite{zhao2019k}. 
\end{abstract}

\section{Introduction}

Given a smooth quasi-projective surface over $\mathbb{C}$, an associative
algebra structure on
\[
K(Quot) = \bigoplus_{n\geq 1} K_{\GL_n}(Quot_n^\circ),
\]
was constructed by Kapranov-Vasserot
\cite{kapranov2019cohomological} and the second author \cite{zhao2019k} and
called the $K$-theoretic Hall algebra of a surface. It was inspired by
Schiffmann-Vasserot \cite{schiffmann2013} for the case $S=\mathbb{A}^2$ and  constructed by
Sala-Schiffmann \cite{sala2018cohomological} and Alexander Minets
\cite{minets18:cohom_hall_higgs} when $S$ is the cotangent bundle of an algebra curve $C$. The $K$-theoretic Hall algebra was categorified by Porta-Sala
\cite{porta2019categorification}, and the two-dimensional categorified Hall
algebra was studied by Diaconescu-Porta-Sala \cite{diaconescu2020mckay} when $S$ is the crepant resolution
of type $A$ singularities.

When $S=\mathbb{A}^2$ with equivariant $\mathbb{G}_m^2$ action, the
$K$-theoretic Hall algebra could be identified by the positive part of the
elliptic Hall algebra, i.e. $\mathbb{Z}[q_1,q_2]$-algebra with generators  $\{E_{k}\}_{k\in \mathbb{Z}}$  modulo the following relations:
  \begin{multline}
    \label{2.1}
    (z-wq_1)(z-wq_2)(z-\frac{w}{q})E(z)E(w) =(z-\frac{w}{q_1})(z-\frac{w}{q_2})(z-wq)E(w)E(z)
  \end{multline}
   \begin{equation}
    \label{2.3}
     [[E_{k+1},E_{k-1}],E_k]=0 \quad \forall k\in \mathbb{Z}
   \end{equation}
    where
   \begin{equation}
     E(z)=\sum_{k\in \mathbb{Z}}\frac{E_k}{z^k}
   \end{equation}
The purpose of this note is to show that the Serre relations also exist in the
$K$-theoretic Hall algebra of any surface. We prove that

\begin{theorem}
  \label{main}
  Given an integer $k$, let $e_k=[z^k\mathcal{O}_S]\in K_{GL_1}(Quot_1^{\circ})$ where $z$ is the
  standard representation of $GL_1$. Then we have
  $$[[e_{k+1},e_{k-1}],e_k]=0$$
\end{theorem}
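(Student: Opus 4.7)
My approach is to compute the triple commutator $[[e_{k+1},e_{k-1}],e_k]$ directly using the shuffle-algebra presentation of the $K$-theoretic Hall algebra developed in \cite{zhao2019k}. Recall that one may embed (a suitable localization of) the degree-$n$ component $K_{\GL_n}(Quot_n^\circ)$ into $K(S^n)[z_1^{\pm},\dots,z_n^{\pm}]^{S_n}$, under which the Hall multiplication becomes a shuffle product built from an explicit two-variable kernel $\zeta(z_i,z_j) \in K(S\times S)[z_i^{\pm},z_j^{\pm}]$ encoding the deformation theory of short exact sequences on $S$ via the tangent bundle $TS$. Since $Quot_1^\circ \cong S$, the generators identify with $e_k \leftrightarrow z^k\otimes [\OO_S]$.

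The steps I would carry out are as follows. First, I would record the shuffle formula for a product of two degree-one classes,
\[
e_a\cdot e_b \;=\; \mathrm{Sym}\bigl(z_1^a z_2^b\cdot \zeta(z_1,z_2)\bigr),
\]
and iterate it to obtain the analogous three-variable expression for a triple product. Second, I would expand $[[e_{k+1},e_{k-1}],e_k]$ as an $S_3$-alternating sum of six shuffle terms, and factor the pairwise kernels $\prod_{i<j}\zeta(z_i,z_j)$ out of the symmetrization. Third, I would check that the remaining alternating sum of monomials in $z_1,z_2,z_3$ vanishes after symmetrization against this kernel, so that the theorem reduces to a formal identity of rational functions over $K(S^3)$.

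The main obstacle is the last step. For $S=\AA^2$ the corresponding vanishing is the known relation \eqref{2.3} in the elliptic Hall algebra, whose usual derivation from the quadratic relation \eqref{2.1} proceeds via torus localization at the fixed points. For a general surface one has no such torus action, and the identity must be established universally on $S^n$. The natural strategy is to show that the kernel $\zeta(z_i,z_j)$ has the same formal shape as in the $\AA^2$ case, with the Chern roots of $TS$ playing the role of the equivariant parameters $q_1,q_2$; then the quadratic relation \eqref{2.1} lifts to a universal identity in the shuffle algebra, and the Serre relation \eqref{2.3} follows from it by exactly the same formal manipulation that produces \eqref{2.3} from \eqref{2.1} for $\AA^2$. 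Verifying that this formal lift is valid without appealing to any genericity hypothesis on the Chern roots of $TS$—and in particular that no denominators are introduced by the six-term cancellation—is, I expect, the technical crux of the argument.
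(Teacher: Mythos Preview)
Your approach is genuinely different from the paper's, and the gap you flag at the end is real and is not closed by the strategy you sketch.

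The paper never invokes a shuffle presentation. It works directly on the stacks, introducing auxiliary ``quadruple'' moduli spaces $\mathcal{Y},\mathcal{Y}_+,\mathcal{Y}_-$ that parametrize commutative squares of length-one inclusions of sheaves, together with projections $\pi^{\uparrow},\pi^{\downarrow}$ to the flag spaces $\widetilde{Flag^\circ_\lambda}$. A Stein-factorization argument (normality of the flag schemes, reducedness of $\mathcal{Y}_\bullet$, fibers a point or $\PP^1$) gives $R\pi^{\uparrow}_*\OO_{\mathcal{Y}}=\OO$, and an explicit section of $\LL_2'\otimes\LL_1^{-1}$ with zero locus $\widetilde{Flag^\circ_{x,x}}$ yields $[\LL_1]-[\LL_2']=[\LL_2']\cdot[\OO_{\widetilde{Flag^\circ_{x,x}}}]$ in $K(\mathcal{Y})$. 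From these two ingredients the paper proves closed commutator formulas $[e_d,e_k]=\sum_{a=k}^{d-1}e_{a,d+k-a}$ and an analogous three-index formula for $[e_{d_1,d_2},e_k]$. Substituting, one gets $[e_{k+1},e_{k-1}]=e_{k-1,k+1}+e_{k,k}$ and then $[e_{k-1,k+1}+e_{k,k},e_k]=e_{k-1,k,k+1}-e_{k-1,k,k+1}=0$.

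The obstacle in your plan is not merely technical. The shuffle map from $K(Coh)$ to symmetric rational functions over $K(S^n)$ is, for a general surface, only defined after localization, and its injectivity on the relevant classes is precisely the content of hypotheses (Negu\c{t}'s Assumptions~A and~S) that this paper is written to remove. An identity checked in the target of the shuffle map therefore does not, without more, yield an identity in $K(Coh)$. Furthermore, the derivation of \eqref{2.3} from \eqref{2.1} in the elliptic Hall algebra is not a pure symbol manipulation that survives replacing $q_1,q_2$ by the Chern roots of $TS$: those roots are not global functions on $S^n$, so ``the same formal manipulation'' only takes place after a further splitting and does not by itself control denominators or descent back to $K(S^n)$. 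Your third step thus needs an integrality/injectivity input that is at least as hard as the theorem itself; the paper supplies exactly that input geometrically, via the Cohen--Macaulay and normality computations for the flag schemes that make the refined Gysin maps agree with ordinary pullbacks.
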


The main idea is inherited from Negut \cite{neguct2018hecke},
while we study the stack case instead of the moduli space of stable sheaves.
Thus we could generalize the Serre relations to other settings, like the
PT categories of local surfaces \cite{Toda2020hall}. We could also remove the Assumption
A and Assumption S in \cite{neguct2018hecke}.

The paper is part of the MIT Undergraduate Research Opportunities Program (UROP), and we are very grateful for Steven Johnson, Elchanan
Mossel and Slava Gerovitch for organizing this program. We also thank Andrei
Negut for suggesting this problem.
\section{Moduli spaces of sheaves and their geometry}
\subsection{Quot Schemes and Flag Schemes}
We follow the notation of \cite{zhao2019k} for Quot and flag schemes.  Given a
non-negative integer $d$, we denote the Grothendieck's Quot scheme 
  $Quot_{d}^{\circ}$ as  the moduli scheme of quotients of coherent sheaves
$$\{\phi_{d}:\mathcal{O}_{S}^{d}\twoheadrightarrow
\mathcal{E}_{d}\},$$
such that
\begin{enumerate}
\item The sheaf  $\mathcal{E}_{d}$ has dimension $0$ and length $d$;
\item  The morphism $H^{0}(\phi):k^{d}\to
H^{0}(\mathcal{E}_{d})$ is an isomorphism.
\end{enumerate}
Over $Quot_{d}^{\circ}\times S$, there is a universal quotient of
coherent sheaves with kernel denoted by $\mathcal{I}_{d}$
$$\phi_{d}:\mathcal{O}^{d} \twoheadrightarrow
\mathcal{E}_{d}.$$
Given be a sequence of non-decreasing integers $d_{\bullet}=(d_{0},d_{1},\ldots,d_{l})$, such
that $d_{0}=0$ and $d_{l}=d$, we fix a flag of inclusion maps $F=\{\mathcal{O}^{d_{1}}\subset \ldots \subset \mathcal{O}^{d_{l}}\}$ and denote $Flag_{d_{\bullet}}^{\circ}$ the moduli space of
  coherent sheaves  $\{\mathcal{E}_{d_{1}}\subset \cdots \mathcal{E}_{d_{l}}\}$
  with quotient maps
  $$\phi_{i}:\mathcal{O}^{d_{i}}\to\mathcal{E}_{d_{i}}$$
  such that
  \begin{enumerate}
  \item $\mathcal{E}_{d_{i}}$ has dimensional $0$ and length $d_{i}$;
  \item $\phi_{i}$ are commutative with the inclusion maps;
  \item $H^{0}(\phi_{i}):k^{d_{i}}\to H^{0}(\mathcal{E}_{d_{i}},k)$.
    are isomorphisms.
  \end{enumerate}
  We also denote 
$$ Quot_{d_{\bullet}}^{\circ }=\prod_{i=1}^{k}Quot_{d_{i}-d_{i-1}}^{\circ }.$$

For each $i$,  over $Flag_{d_{\bullet}}^{\circ}\times S$ there are universal quotients of coherent sheaves
$$\phi_{i}:\mathcal{O}^{d_{i}}\to \mathcal{E}_{d_{i}}.$$
 Fixing an isomorphism
$\mathcal{O}^{d_{i}-d_{i-1}}=\mathcal{O}^{d_{i}}/\mathcal{O}^{d_{i-1}}$ and
defining
$\mathcal{E}_{d_{i},d_{i-1}}:=\mathcal{E}_{d_{i}}/\mathcal{E}_{d_{i-1}}$, we
have 
$$\phi_{i,i-1}:\mathcal{O}^{d_{i}-d_{i-1}}\to
\mathcal{E}_{d_{i},d_{i-1}}$$ is also surjective. It induces a morphism
\begin{equation}
  \label{eq:def2}
  p_{d_{\bullet}}:Flag_{d_{\bullet}}^{\circ }\to Quot_{d_{\bullet}}^{\circ }.
\end{equation}

We will also consider the group actions on Quot schemes and
flag schemes, with the following notations:
\begin{enumerate}
\item The group $GL_{d}=GL_{d}(k)$ has a natural action on $Quot_{d}^{\circ}$ by acting on $\mathcal{O}^{n}$.
\item Let $P_{d_{\bullet}}$ be the parabolic group of $GL_{d}$ which
  preserves the flag $F$. $P_{d_{\bullet}}$ has a natural action on
  $Flag_{d_{\bullet}}^{\circ }$. Let $B_{d_{\bullet}}$ be the parabolic Lie
  subalgebra of $\mathfrak{gl}_{n}$ which preserves the flag $F$.
\item By \cite{minets18:cohom_hall_higgs} $Flag_{d_{\bullet}}^{\circ }$, is
a closed subscheme of $Quot_{d}^{\circ}$ with an inclusion map 
\begin{equation}
  \label{eq:def1}
  i_{d_{\bullet}}:Flag_{d_{\bullet}}^{\circ} \to Quot_{d}^{\circ }.
\end{equation} The morphism $i_{d_{\bullet}}$ is $P_{d_{\bullet}}$-equivariant. 
  Let $\widetilde{Flag_{d_{\bullet}}^{\circ }}=Flag_{d_{\bullet}}^{\circ
  }\times_{P_{d_{\bullet}}}G_{d}$. $i_{d_{\bullet}}$ induces a proper
  $G_{d}$-equivariant
  morphism $$q_{d_{\bullet}}:\widetilde{Flag_{d_{\bullet}}^{\circ
    }}\to Quot_{d}^{\circ}.$$
\item We will use the notation $Quot_{n,m}^{\circ}$ for $Quot_{d_\bullet}^\circ$ where $d_{\bullet}=(0,n,n+m)$ and $Quot_{n,m,l}^\circ=Quot_{d_{\bullet}}^\circ$ for
  $d_{\bullet}=(0,n,n+m,n+m+l)$. The same principle holds for other
  notations, like $Flag_{d_\bullet}^\circ,p_{d_\bullet},q_{d_{\bullet}}$ and so
  on.
\item Given a matrix $X$ (or other notations like $Y,g$, etc.) we will always
  denote $X_{ij}$ (or $Y_{ij},g_{ij}$, etc.) the $i$-th row and $j$-th column of the matrix.
\end{enumerate}

\begin{lemma}
  $\widetilde{Flag_{d_{\bullet}}^{\circ}}$ is the moduli space of coherent sheaves
  $$\{\mathcal{E}_{d_{1}}\subset \cdots \mathcal{E}_{d_{l}}\}$$
  with a quotient map
  $$\phi_d:\mathcal{O}^{d}\to \mathcal{E}_{d}$$
  such that $H^{0}(\phi_d):k^d\to H^0(\mathcal{E},k)$ is an isomorphism.
\end{lemma}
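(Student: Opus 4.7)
The plan is to construct mutually inverse morphisms between $\widetilde{Flag_{d_\bullet}^\circ}$ and the moduli space $M$ described in the statement, essentially by ``forgetting the rigidification of the flag'' in one direction and ``recovering it from $H^0(\phi_d)$'' in the other.

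First, I would construct a morphism $\widetilde{Flag_{d_\bullet}^\circ} \to M$. On $Flag_{d_\bullet}^\circ$ the top quotient $\phi_l : \mathcal{O}^d \to \mathcal{E}_d$ together with the chain $\mathcal{E}_{d_1}\subset \cdots \subset \mathcal{E}_{d_l}$ already gives $M$-data, and this assignment is $P_{d_\bullet}$-equivariant since $P_{d_\bullet}$ acts compatibly on $\mathcal{O}^d$ and on the framings preserving the flag $F$. Extending to $Flag_{d_\bullet}^\circ \times G_d$ by $(\mathcal{E}_\bullet,\phi_l,g) \mapsto (\mathcal{E}_\bullet, \phi_l \circ g^{-1})$ and descending by the diagonal $P_{d_\bullet}$-action yields the desired morphism from the associated bundle, since post-composition by $g^{-1}$ erases the rigidity of the flag on the source.

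Second, I would construct the inverse $M \to \widetilde{Flag_{d_\bullet}^\circ}$. Given a point of $M$, the isomorphism $H^0(\phi_d) : k^d \xrightarrow{\sim} H^0(\mathcal{E}_d)$ pulls the subspaces $H^0(\mathcal{E}_{d_i}) \subset H^0(\mathcal{E}_d)$ back to a flag $V_\bullet \subset k^d$ with $\dim V_i = d_i$; this is classified by a morphism to $G_d/P_{d_\bullet}$. \'Etale-locally on $M$ one can choose $g \in G_d$ carrying the standard flag $F$ to $V_\bullet$. Then $\phi_d\circ g : \mathcal{O}^d \to \mathcal{E}_d$ respects the standard flag in the sense that it restricts to quotients $\mathcal{O}^{d_i} \to \mathcal{E}_{d_i}$, giving a point of $Flag_{d_\bullet}^\circ$; the pair (this flag-rigidified data, $g$) lies in $Flag_{d_\bullet}^\circ \times G_d$, and its image in the associated bundle is independent of the choice of $g$ since any other choice differs by right multiplication by an element of $P_{d_\bullet}$.

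Finally, verifying that these two morphisms are mutually inverse is a direct check on families: starting from a flag-datum twisted by $g$, the flag in $k^d$ induced via $H^0$ is precisely $g\cdot F$, so we recover the original pair up to $P_{d_\bullet}$; conversely, starting from $M$-data, applying $g^{-1}$ to trivialize the flag and then twisting by $g$ returns $\phi_d$. The main technical obstacle is carrying out these constructions in families rather than on points, that is, realizing $M$ as a functor/stack and showing that the choice of $g$ trivializing $V_\bullet$ can be made \'etale-locally on the base; once one invokes that $G_d \to G_d/P_{d_\bullet}$ is a $P_{d_\bullet}$-torsor, this reduces the identification to the $P_{d_\bullet}$-equivariance established in the first paragraph and the proof is complete.
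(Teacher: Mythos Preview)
Your argument is correct and is essentially an explicit unpacking of the paper's one-line proof. The paper simply observes that $[\widetilde{Flag_{d_\bullet}^\circ}/\GL_d]=[Flag_{d_\bullet}^\circ/P_{d_\bullet}]$ is the moduli stack of flags $\mathcal{E}_{d_1}\subset\cdots\subset\mathcal{E}_{d_l}$, so that both $\widetilde{Flag_{d_\bullet}^\circ}$ and the space $M$ in the statement are $\GL_d$-torsors over the same stack (the torsor structure on $M$ coming from the framing $\phi_d$), hence isomorphic. Your construction of mutually inverse morphisms via the induced flag $V_\bullet=H^0(\phi_d)^{-1}(H^0(\mathcal{E}_{d_i}))\subset k^d$ and the \'etale-local choice of $g\in\GL_d$ trivializing it is exactly what this stack statement says once one writes out the torsor $\GL_d\to\GL_d/P_{d_\bullet}$ explicitly; the two arguments differ only in packaging.
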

\begin{proof}
  It is obvious as $[\widetilde{Flag_{d_{\bullet}}^{\circ}}/GL_d]=[Flag_{d_{\bullet}}^\circ/P_{d_\bullet}]$ which is the moduli stack of coherent sheaves
  $$\{\mathcal{E}_{d_{1}}\subset \cdots \mathcal{E}_{d_{l}}\}$$
  where $\mathcal{E}_{d_i}$ is a dimension $0$, length $d_i$ coherent sheaf.
\end{proof}

We follow the notation of \cite{neguct2018hecke} for the set partition, i.e. an equivalence relation on a finite ordered set. We represent partitions
 suggestively, for example $(x,y,x)$ will refer to the partition of a
 $3$-element set into distinct $1$-element subsets, while $(x,y,x)$(respectively
 $(x,x,x)$) refers to the equivalence relation which sets the first and the last
 element (respectively all elements) equivalent to each other. The size of a
 partition $\lambda$, which is denoted by $|\lambda|$, is the number of elements
 of the underlying set.

 \begin{definition}
   Given a positive integer $n$, let $d_{\bullet}=(0,1,\cdots,n)$. For a set
   partition $\lambda$ of size $n$, we will consider the schemes
   $Flag_{\lambda}^{\circ}$ (or $\widetilde{Flag_{\lambda}^{\circ}}$) which
   consist of elements in $Flag_{d_{\bullet}}^\circ$ (or $\widetilde{Flag_{d_{\bullet}}^{\circ}}$)
   such that
   $$\{\mathcal{E}_{0}=0\subset_{x_1}\mathcal{E}_{1}\subset_{x_2}\cdots
   \subset_{x_n} \mathcal{E}_{n}\}$$
   for some $x_1,\cdots, x_n\in S$ such that $x_i=x_j$ if $i\sim j\in \lambda$,
   where $\mathcal{F}'\subset_{x}\mathcal{F}$ means that $\mathcal{F}'\subset
   \mathcal{F}$ and $\mathcal{F}/\mathcal{F}'\cong k_x$.
 \end{definition}
 \begin{remark}
   
 On $Flag_{1,1}^\circ$ (or $\widetilde{Flag_{1,1}^\circ}$), there are
tautological vector bundles $\mathcal{U}_1\subset \mathcal{U}_2$ such that the
fiber at each closed point is the vector space of global sections of $\mathcal{E}_{1}$ and
$\mathcal{E}_2$ respectively. Moreover, we define line bundles $\LL_2
=\mathcal{U}_2/\mathcal{U}_{1}=z_2\mathcal{O}$ and $\LL_1 =
\mathcal{U}_1=z_1\mathcal{O}$, where $z_1$ (or $z_2$) is a
character of $P_{1,1}$ by mapping the matrix $X\in P_{11}$ to $X_{11}$ (or $X_{22}$).

  On $Flag_{1,1,1}^\circ$  or $\widetilde{Flag_{1,1,1}^\circ}$, there are universal vector bundles $\mathcal{U}_i$ whose fiber at each closed point is
the vector space of global sections of $\mathcal{E}_i$ and we define line bundles $\LL_i =
\mathcal{U}_{i}/ \mathcal{U}_{i-1}$. $\LL_i=z_i\mathcal{O}$ is $P_{1,1,1}$ equivariant on
$Flag_{1,1,1}^\circ$ and $G_3$ equivariant on  $\widetilde{Flag_{1,1,1}^\circ}$,
where $z_i$ are characters of $P_{1,1,1}$ which map the matrix $X$ to $X_{ii}$.
\end{remark}
The following lemmas state the local geometric properties of Flag schemes. While those lemmas hold for all smooth quasi-projective surfaces, we only need
to prove the case when $S=\mathbb{A}^2$, as the problem is local.
\begin{lemma}\label{lemma:Flag_schemes_Cohen_Macaulay}
The schemes $Flag_{x,y}^\circ$, $Flag_{x,x}^{\circ}$,
$Flag_{x,y,z}^{\circ}$,$Flag_{x,x,y}^\circ$, $Flag_{y,x,x}^\circ$ are Gorenstein schemes of $5,4,9,8,8$ dimensions respectively.
\end{lemma}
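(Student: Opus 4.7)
\emph{Plan.} Since the statement is local on $S$, I work throughout with $S = \AA^2 = \Spec k[u, v]$. The strategy is to realize each flag scheme as an explicit complete intersection in a smooth affine space, which is automatically Gorenstein. I use the ADHM-style presentation: a point of $Quot_n^\circ$ on $\AA^2$ is a commuting pair $(X, Y) \in \mathrm{Mat}_n \times \mathrm{Mat}_n$ representing multiplication by $u, v$ on $H^0(\EE) \cong k^n$, and the flag scheme $Flag_{d_\bullet}^\circ$ for $d_\bullet = (0, 1, \ldots, n)$ is cut out by the additional requirement that $X, Y$ both be upper triangular, so that the standard flag on $\OO^n$ maps to a flag of subsheaves of successive lengths $1$.

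For $n = 2$: upper triangular pairs form $\AA^6$, and $[X,Y]$ has the single nontrivial entry $(XY - YX)_{12} = Y_{12}(X_{11} - X_{22}) - X_{12}(Y_{11} - Y_{22})$. Hence $Flag_{x,y}^\circ$ is a hypersurface in $\AA^6$, automatically Gorenstein of dimension $5$. The subscheme $Flag_{x,x}^\circ$ further imposes that the supports of $\EE_1$ and of $\EE_2/\EE_1$ coincide, that is, $X_{11} = X_{22}$ and $Y_{11} = Y_{22}$; these collapse the commutator equation to zero, so $Flag_{x,x}^\circ$ is the linear subspace $\AA^4 \hookrightarrow \AA^6$, manifestly Gorenstein of dimension $4$.

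For $n = 3$: upper triangular pairs form $\AA^{12}$, and $[X,Y]$ has three nontrivial entries $f_1, f_2, f_3$ at positions $(1,2), (1,3), (2,3)$. I will verify that these three form a regular sequence, so $Flag_{x,y,z}^\circ$ is a codimension-$3$ complete intersection of dimension $9$. For $Flag_{x,x,y}^\circ$ I further impose $X_{11} = X_{22}, Y_{11} = Y_{22}$, which kills $f_1$ identically, leaving a codimension-$4$ complete intersection of dimension $8$; for $Flag_{y,x,x}^\circ$ I instead impose $X_{22} = X_{33}, Y_{22} = Y_{33}$, killing $f_3$, with the same consequence.

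The main obstacle is verifying that the listed equations form a regular sequence. The argument I have in mind is based on disjointness of variables: $f_1$ only involves the top-left $2\times 2$ block of $X$ and $Y$, while $f_3$ only involves the bottom-right $2\times 2$ block, overlapping just in $X_{22}, Y_{22}$, and each is an irreducible bilinear form, so $(f_1, f_3)$ is regular. Then $f_2$ introduces the fresh variables $X_{13}, Y_{13}$ with nontrivial coefficients $\pm(X_{11} - X_{33})$ and $\pm(Y_{11} - Y_{33})$ in the quotient, so $f_2$ is a nonzero divisor and the full sequence is regular. The same disjointness-plus-freshness pattern continues to work after imposing the linear equations for the specialized subschemes.
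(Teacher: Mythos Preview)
Your approach matches the paper's: reduce to $S=\AA^2$, present each flag scheme via the explicit commutator equations among upper-triangular pairs $(X,Y)$, and deduce Gorenstein from the complete-intersection description. The paper's own proof is in fact terser than yours---it simply lists the equations and asserts the dimensions---so your write-up is, if anything, more detailed.

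Two small remarks. First, your labeling is swapped relative to the paper's convention: there $Flag_{x,x,y}^\circ$ imposes $X_{22}=X_{33},\ Y_{22}=Y_{33}$ (so it is your $f_3$ that vanishes identically), while $Flag_{y,x,x}^\circ$ imposes $X_{11}=X_{22},\ Y_{11}=Y_{22}$. This is harmless for the present lemma, since the two schemes are isomorphic, but keep it in mind when aligning with the later computations in the paper. Second, in your argument that $f_2$ is a nonzero divisor modulo $(f_1,f_3)$, saying the coefficients of the fresh variables $X_{13},Y_{13}$ are ``nontrivial in the quotient'' is not quite enough: you need $Y_{11}-Y_{33}$ (or $X_{11}-X_{33}$) to be a \emph{nonzero divisor} in $k[\ldots]/(f_1,f_3)$, not merely nonzero. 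This does hold, because that quotient is an integral domain: after the linear substitution $a=X_{11}-X_{22}$, $b=Y_{11}-Y_{22}$, $c=X_{22}-X_{33}$, $d=Y_{22}-Y_{33}$, the forms $f_1=X_{12}b-Y_{12}a$ and $f_3=X_{23}d-Y_{23}c$ live in disjoint sets of variables and each is irreducible. Alternatively, one can simply check the dimension of the zero locus and use that any sequence in a regular ring cutting out the expected codimension is automatically regular.
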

\begin{proof} When $S=\mathbb{A}^2$,
\begin{itemize}
    \item $Flag_{x,y}^\circ = \{X,Y\in B_{1,1}: [X,Y] = 0\}$ is a subscheme of $\AA^6 = \AA[X_{ij},Y_{ij}]_{1\leq i\leq j\leq 2}$ cut out by the following equation
    \[
    X_{12}(Y_{11}-Y_{22}) = Y_{12}(X_{11}-X_{22}) 
    \]
    and hence has dimension 5.
    \item $Flag_{x,x}^\circ = \{X,Y\in B_{1,1}: [X,Y] = 0\}$ is a subscheme of
      $\AA^6 = \AA[X_{ij},Y_{ij}]_{1\leq i\leq j\leq 2}$ cut out by the
      following equation
      \begin{align*}
        X_{11}=X_{22} \quad  Y_{11}=Y_{22}
      \end{align*}
      and hence has dimension $4$.
    \item $Flag_{x,y,z}^\circ = \{X,Y\in B_{1,1,1}: [X,Y] = 0\}$ is a subscheme of $\AA^{12} = \AA[X_{ij},Y_{ij}]_{1\leq i\leq j\leq 3}$ cut out by the following equations
    \begin{align*}
        & X_{12}(Y_{11}-Y_{22}) = Y_{12}(X_{11}-X_{22}) \\
        & X_{23}(Y_{22}-Y_{33}) = Y_{23}(X_{22}-X_{33}) \\
        & X_{13}(Y_{11}-Y_{33}) - Y_{13}(X_{11} - X_{33}) = X_{12}Y_{23} - X_{23}Y_{12}
    \end{align*}
    and hence has dimension 9.
   \item $Flag_{x,x,y}^\circ = \{X,Y\in B_{1,1,1}: [X,Y] = 0, X_{22}=X_{33},Y_{22}=Y_{33}\}$ is a subscheme of $\AA^{12}$ cut out by equations
     \begin{align*}
       & X_{22}=X_{33},Y_{22}=Y_{33} \\
        & X_{12}(Y_{11}-Y_{22}) = Y_{12}(X_{11}-X_{22}) \\
        & X_{13}(Y_{11}-Y_{22}) - Y_{13}(X_{11} - X_{22}) = X_{12}Y_{23} - X_{23}Y_{12}
    \end{align*}
    Thus, it has dimension 8. The similar computation also holds for $Flag_{y,x,x}^\circ$.
    \end{itemize}

  \end{proof}
  \begin{lemma}
    $Flag_{x,y,x}^{\circ}$ is Cohen-Macaulay of dimension $8$.
  \end{lemma}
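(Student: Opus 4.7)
The statement is local on $S$, so I would reduce to the case $S=\AA^2$ exactly as in the previous lemma, and my plan is to obtain explicit defining equations for $Flag_{x,y,x}^\circ$ inside $\AA^{12}=\AA[X_{ij},Y_{ij}]_{1\le i\le j\le 3}$. Starting from the three commutator equations for $Flag_{x,y,z}^\circ$ already written down in Lemma~\ref{lemma:Flag_schemes_Cohen_Macaulay}, I would impose the two additional linear conditions $X_{11}=X_{33}$ and $Y_{11}=Y_{33}$ that encode the collision of the outer supports. This cuts $\AA^{12}$ down to an affine $10$-space in which $Flag_{x,y,x}^\circ$ becomes a closed subscheme defined by three polynomial relations.

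The decisive step is to recognise, after the substitution $X_{33}\mapsto X_{11}$, $Y_{33}\mapsto Y_{11}$, that the third commutator relation degenerates to $X_{12}Y_{23}=X_{23}Y_{12}$ and that the remaining two relations are $X_{12}(Y_{11}-Y_{22})=Y_{12}(X_{11}-X_{22})$ and $X_{23}(Y_{11}-Y_{22})=Y_{23}(X_{11}-X_{22})$. Taken together, these three relations are exactly the $2\times 2$ minors of the $2\times 3$ matrix
\[
M=\begin{pmatrix}X_{12}&X_{23}&X_{11}-X_{22}\\ Y_{12}&Y_{23}&Y_{11}-Y_{22}\end{pmatrix}.
\]
A linear change of coordinates on $\AA^{10}$ turns the six entries of $M$ into new free coordinates, leaving the remaining four coordinates (coming from $X_{13}$, $Y_{13}$, $X_{11}+X_{22}$, $Y_{11}+Y_{22}$) untouched. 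This exhibits an isomorphism $Flag_{x,y,x}^\circ\cong Z\times\AA^4$, where $Z\subset\AA^6$ is the classical determinantal variety of $2\times 3$ matrices of rank at most one.

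The proof then closes by invoking the standard Eagon--Northcott (or Hochster--Eagon) theorem: the ideal of maximal minors of a generic $m\times(m+1)$ matrix is perfect of the expected codimension, so $Z$ is Cohen--Macaulay of codimension $2$ in $\AA^6$, hence of dimension $4$. Taking the product with $\AA^4$ preserves Cohen--Macaulayness and yields the claimed dimension $8$. The main obstacle is really the middle step: one must unpack the commutator relations carefully enough to see that imposing the symmetry $x\sim z$ forces precisely the $2\times 3$ determinantal structure on the ideal, and in particular that there are no hidden embedded components or extra generators beyond the three maximal minors. Once this identification is made, the Cohen--Macaulay property and the dimension count are entirely standard, and no analogue of Gorenstein-ness needs to be asserted because the statement only demands Cohen--Macaulay (indeed, the $2\times 3$ determinantal singularity is well known not to be Gorenstein).
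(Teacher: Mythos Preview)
Your argument is correct and follows essentially the same route as the paper: both reduce to $S=\AA^2$, obtain the same three relations in $\AA^{10}$, split off an $\AA^4$ factor, and reduce to showing that $k[x_1,x_2,x_3,y_1,y_2,y_3]/(x_1y_2-x_2y_1,\,x_2y_3-x_3y_2,\,x_3y_1-x_1y_3)$ is Cohen--Macaulay of dimension $4$. The only difference is cosmetic: the paper outsources this last fact to Claim~5.22 of \cite{neguct2018hecke}, whereas you recognise the ideal as the maximal minors of a generic $2\times 3$ matrix and invoke Hochster--Eagon directly, which is a bit more self-contained and also explains why the statement only asserts Cohen--Macaulay rather than Gorenstein.
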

  \begin{proof} When $S=\mathbb{A}^2$,
    $Flag_{x,y,x}^\circ = \{X,Y\in B_{1,1,1}: [X,Y] = 0, X_{11}=X_{33},Y_{11}=Y_{33}\}$ is a subscheme of $\AA^{12}$ cut out by equations
    \begin{align*}
      & X_{11}=X_{33} \quad  Y_{11}=Y_{33} \\
  & X_{12}(Y_{11}-Y_{22}) = Y_{12}(X_{11}-X_{22}) \\
        & X_{23}(Y_{22}-Y_{11}) = Y_{23}(X_{22}-X_{11}) \\
        & X_{12}Y_{23} = X_{23}Y_{12}.
    \end{align*}
    Let
    $x_1=X_{12},x_2=X_{11}-X_{22},y_1=Y_{12},y_{2}=Y_{11}-Y_{22},x_3=X_{23},y_3=Y_{23}$,
    then we only need to prove that
    $$k[x_1,x_2,x_3,y_1,y_2,y_3]/(x_1y_2-x_2y_1,x_2y_3-x_3y_2,x_3y_1-x_1y_3)$$
    is Cohen-Macaulay of dimension $4$, which follows from Claim 5.22 of \cite{neguct2018hecke}.
  \end{proof}

  \begin{lemma}\label{lemma:Flag_schemes_normal}
The schemes $Flag_{x,y}^\circ$, $Flag_{x,x}^{\circ}$,
$Flag_{x,y,z}^{\circ}$,$Flag_{x,x,y}^\circ$, $Flag_{x,y,x}^\circ$,
$Flag_{y,x,x}^\circ$ are normal.
\end{lemma}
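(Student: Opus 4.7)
The plan is to invoke Serre's criterion for normality (R1 + S2). The preceding two lemmas establish that all six schemes are Cohen–Macaulay, so the S2 condition is automatic, and it suffices to check R1: that the singular locus has codimension at least $2$ in each scheme.

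For $Flag_{x,x}^\circ$, observe that on the locus $X_{11}=X_{22}$, $Y_{11}=Y_{22}$ the commutator equation $X_{12}(Y_{11}-Y_{22})=Y_{12}(X_{11}-X_{22})$ degenerates to $0=0$. Thus $Flag_{x,x}^\circ$ is simply the linear subspace $\AA^4\subset\AA^6$ and is smooth, hence normal, without any further argument.

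For the remaining hypersurface or complete intersection schemes $Flag_{x,y}^\circ$, $Flag_{x,y,z}^\circ$, $Flag_{x,x,y}^\circ$, $Flag_{y,x,x}^\circ$, I apply the Jacobian criterion. In each case one computes the partial derivatives of the defining equations (as listed in Lemma \ref{lemma:Flag_schemes_Cohen_Macaulay}) and identifies the sublocus on which the Jacobian matrix fails to have maximal rank. For $Flag_{x,y}^\circ$ for instance, the Jacobian vanishes exactly on $\{X_{12}=Y_{12}=0,\;X_{11}=X_{22},\;Y_{11}=Y_{22}\}$, a $2$-dimensional subscheme of the $5$-dimensional $Flag_{x,y}^\circ$, hence of codimension $3$. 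An analogous analysis for the $3$-step flag schemes shows that the Jacobian drops rank only when several off-diagonal entries vanish \emph{and} several pairs of diagonal entries coincide, producing a singular locus of codimension at least $2$ in each case.

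For $Flag_{x,y,x}^\circ$, which is not a complete intersection, the change of variables carried out in the previous lemma reduces normality to that of
\[
R=k[x_1,x_2,x_3,y_1,y_2,y_3]/(x_1y_2-x_2y_1,\;x_2y_3-x_3y_2,\;x_3y_1-x_1y_3).
\]
This is the affine cone over the Segre embedding $\PP^1\times\PP^2\hookrightarrow\PP^5$, equivalently the variety of $2\times 3$ matrices of rank at most one, which is a classical determinantal variety known to be normal. Alternatively, one checks directly that the Jacobian of the three $2\times 2$ minors has rank $2$ everywhere except at the origin, so the singular locus is the single point $\{0\}$ inside a $4$-dimensional variety, giving R1 with room to spare.

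The main obstacle I anticipate is the index bookkeeping in the Jacobian computations for the $3$-step schemes, where one works with $3\times 12$ or $4\times 12$ matrices of symbolic entries. Once the singular loci are correctly identified as being cut out by enough extra linear and off-diagonal equations, the dimension count yielding R1 is routine.
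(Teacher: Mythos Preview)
Your proposal is correct and follows essentially the same approach as the paper: invoke Serre's criterion, use Cohen--Macaulayness from the preceding lemmas for S2, and verify R1 via the Jacobian; in particular, both you and the paper compute the singular locus of $Flag_{x,y}^\circ$ explicitly and handle $Flag_{x,y,x}^\circ$ by reducing to the $2\times 3$ minor variety and checking that its Jacobian drops rank only at the origin. Your additional identification of that variety with the affine cone over the Segre embedding $\PP^1\times\PP^2\hookrightarrow\PP^5$ is a nice touch that the paper does not make explicit, but the underlying argument is the same.
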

\begin{proof}   By Lemma \ref{lemma:Flag_schemes_Cohen_Macaulay},  $Flag_{x,y}^{\circ}$ is cut out by the equation
\[X_{21}(Y_{11}-Y_{22}) - Y_{21}(X_{11}-X_{22}).
\]
By taking partial derivatives, we can compute that the singular locus is locally given by
\[
X_{21}=Y_{21}=X_{11}-X_{22} = Y_{11} - Y_{22} = 0,
\]
which is of codimension 3 and thus smooth.

Similarly, for the other cases, it suffices to show that the singular loci of
$Flag_{x,x,y}^\circ$, $Flag_{x,y,x}^\circ$, and $Flag_{y,x,x}^\circ$ have
codimension $\geq 2$. Here we only prove the $Flag_{x,y,x}^\circ$ case. In the proof of Lemma \ref{lemma:Flag_schemes_Cohen_Macaulay}, we computed that $Flag_{x,y,x}^{\circ}$ is isomorphic to \[\AA^4 \times \AA^6/(x_1y_2-x_2y_1,x_2y_3-x_3y_2,x_3y_1-x_1y_3).\]
  The Jacobian matrix of this ideal is
\[
J = \begin{pmatrix}
y_2 & -y_1 & & -x_2 & x_1 & \\
y_3 & & -y_1 & -x_3 &  & x_1 \\
 & y_3 & -y_2 & & -x_3 & x_2
\end{pmatrix}
\]
At general points, the rank of $J$ is 2. The singular locus is the set of points where $\text{rank}(J)\leq 1$. This only happens when $x_i = y_i = 0$ for all $i$. Thus, the singular locus of $Flag_{x,y,x}^\circ$ has codimension 4, as desired.
\end{proof}
 \subsection{Quadruple and Triple moduli space of sheaves}

\begin{definition}
We define the quadruple moduli space $\mathcal{Y}$ which parameterizes the
following commutative diagram
    \begin{equation}\label{diag:3}
    \begin{tikzcd}
     & \EE_1 \arrow[hookrightarrow]{dr}{y}\\
  0 \ar[hookrightarrow]{ur}{x} \ar[hookrightarrow]{dr}{y}   & & \EE_2  \\
    & \EE_1' \arrow[hookrightarrow]{ur}{x}
    \end{tikzcd}
  \end{equation}
 of coherent sheaves where each successive inclusion is colength $1$ and
 supported at the point indicated on the diagram, with a surjective morphism $\phi:\mathcal{O}^{2}\to \mathcal{E}_2$ such that
  $h^{0}(\phi)$ is an isomorphism.

We define $\mathcal{Y}_+,\mathcal{Y}_{-}$ to be the moduli space which
parameterize the following commutative diagrams:

    \begin{equation}\label{diag:4}
    \begin{tikzcd}
     & \EE_1 \arrow[hookrightarrow]{dr}{y}\\
  0 \ar[hookrightarrow]{ur}{x} \ar[hookrightarrow]{dr}{y}   & & \EE_2 \arrow[hookrightarrow]{r}{x} & \EE_3 \\
    & \EE_1' \arrow[hookrightarrow]{ur}{x}
    \end{tikzcd}
  \end{equation}
  \begin{equation}\label{diag:5}
    \begin{tikzcd}
    & & \EE_2 \arrow[hookrightarrow]{dr}{y} \\
     0 \arrow[hookrightarrow]{r}{x} & \EE_1 \arrow[hookrightarrow]{ur}{x}\arrow[hookrightarrow]{dr}{y} & & \EE_3 \\
    & & \EE_2' \arrow[hookrightarrow]{ur}{x}
    \end{tikzcd}
  \end{equation}
respectively, of coherent sheaves where each successive inclusion is colength $1$ and
  supported at the point indicated on the diagram, with a surjective morphism $\phi:\mathcal{O}^{3}\to \mathcal{E}_3$ such that  $h^{0}(\phi)$ is an isomorphism.
\end{definition}
For the above moduli spaces $\mathcal{Y},\mathcal{Y}_+,\mathcal{Y}_-$, we
denote $\EE_i=\EE_{i}'$ if $\EE_{i}'$ is not in diagrams
(\ref{diag:3})-(\ref{diag:5}). We denote $\mathcal{U}_{i}$ (or $\mathcal{U}_i'$) to be the
locally free sheaves with fibers the vector space of global sections of $\mathcal{E}_i$ (or
$\mathcal{E}_{i}'$) and denote
\begin{align*}
\mathcal{L}_i=\mathcal{U}_{i}/\mathcal{U}_{i-1}; \\
\mathcal{L}_i'=\mathcal{U}_i'/\mathcal{U}_{i-1}'.
\end{align*}

\begin{lemma}\label{lemma:Y,Y+-_reduced}
$\mathcal{Y}, \mathcal{Y}_{\pm}$ are reduced.
\end{lemma}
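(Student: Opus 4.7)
The plan is to invoke Serre's criterion for reducedness: a Noetherian scheme is reduced iff it satisfies $(R_0)$ and $(S_1)$. Since $(S_1)$ follows from Cohen-Macaulayness, it suffices to show that $\mathcal{Y}$ and $\mathcal{Y}_\pm$ are Cohen-Macaulay and generically reduced. As in the preceding lemmas, the question is local on $S$, so I would reduce to the case $S=\AA^2$ throughout.

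For generic reducedness, I would identify an open dense smooth locus. Specifically, consider the locus where all the support points appearing in diagrams (\ref{diag:3})--(\ref{diag:5}) are pairwise distinct. On this open set the relevant $\mathcal{E}_n$ splits as a direct sum of skyscraper sheaves at distinct points of $S$, so each invariant subspace is canonically the sum of a subset of the eigenlines of the commuting matrix pair $(X,Y)$. The two filtrations therefore decouple, and the moduli space becomes (locally) a product of smooth factors. A dimension count, analogous to the ones in Lemma \ref{lemma:Flag_schemes_Cohen_Macaulay}, then shows this open subset is dense in each of $\mathcal{Y}, \mathcal{Y}_\pm$, yielding $(R_0)$.

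For Cohen-Macaulayness, I would give an explicit presentation in the spirit of Lemma \ref{lemma:Flag_schemes_Cohen_Macaulay}. A closed point of $\widetilde{Flag_{1,1}^\circ}$ over $\AA^2$ corresponds to a cyclic commuting pair $(X,Y)\in M_2\times M_2$ together with an $(X,Y)$-invariant line, so $\mathcal{Y}$ is cut out inside an affine space by the commutation relations for $(X,Y)$ and the invariance conditions for two lines $L,L'$. After a change of coordinates similar to the one used for $Flag_{x,y,x}^\circ$ in Lemma 2.3, I expect the nontrivial part of the defining ideal to reduce to an ideal generated by the $2\times 2$ minors of a $2\times 3$ (or $3\times 3$) matrix of linear forms, at which point Negut's Claim 5.22 applies directly. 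The same strategy should handle $\mathcal{Y}_\pm$: each of these parameterizes two three-step flags sharing common endpoints, so its defining ideal consists of two copies of the commutation relations already analyzed for the triple flag schemes, glued along the shared data.

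The main obstacle will be the explicit computation for $\mathcal{Y}_\pm$, where the combined ideal is considerably larger than in the Flag cases and it is not immediately obvious that a single change of variables puts it in determinantal form. If the direct approach becomes unwieldy, an alternative is to realize $\mathcal{Y}_\pm$ as a fiber product of two of the (already Cohen-Macaulay and normal) Flag schemes from Lemmas \ref{lemma:Flag_schemes_Cohen_Macaulay}--\ref{lemma:Flag_schemes_normal} over a common smaller flag, and deduce Cohen-Macaulayness by flat base change; verifying flatness of the appropriate forgetful maps is then the key step, and can be checked by matching dimensions of fibers over a stratification by configurations of support points.
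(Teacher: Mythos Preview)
Your strategy is different from the paper's and, while plausible in outline, leaves the hardest step unexecuted. The paper does \emph{not} go through Serre's criterion or Cohen--Macaulayness for $\mathcal{Y},\mathcal{Y}_\pm$. Instead it passes to an atlas: one writes $\mathcal{Y}=\mathcal{Y}'/(P_{1,1}\times P_{1,1})$ where $\mathcal{Y}'$ is the fiber product of two copies of $Flag_{1,1}^\circ\times\GL_2$ over $Quot_2^\circ\times S\times S$, and similarly $\mathcal{Y}_+'$ is a fiber product over $Flag_{2,1}^\circ\times S\times S$. The point is that the extra $\GL_2$ (resp.\ $P_{2,1}$) coordinates $h_{ij}$ (resp.\ $g_{ij}$) are available, and on the open charts determined by nonvanishing of a single matrix entry ($h_{11}\neq 0$ or $h_{12}\neq 0$; $g_{23}\neq 0$ or $g_{22}g_{33}\neq 0$) one can \emph{solve} for most of the $X,Y$ variables. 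For $\mathcal{Y}'$ what remains cuts out an affine space, so $\mathcal{Y}'$ is smooth; for $\mathcal{Y}_+'$ what remains is a single irreducible quadric such as $X_{21}Y_0=Y_{21}X_0$ in an affine space, hence reduced by inspection.

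Your route via $(R_0)+(S_1)$ would in principle succeed, but the Cohen--Macaulay verification you flag as ``the main obstacle'' is genuinely the gap in your proposal: you have not shown that the defining ideal of $\mathcal{Y}_\pm$ is determinantal, and the flat-base-change alternative requires checking flatness of forgetful maps that are not obviously flat (the fiber dimensions jump over the coincidence loci you yourself stratify by). The paper sidesteps both issues entirely by introducing the group variable and working chartwise. If you want to salvage your approach, note that the paper's computation in fact shows $\mathcal{Y}_\pm'$ is locally a hypersurface in affine space, which gives Cohen--Macaulayness for free; but at that point you have already done the paper's argument and Serre's criterion is redundant.
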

\begin{proof} We only prove the case $S=\mathbb{A}^2$, as it is still a local question.
Let $\mathcal{Y}'$ be the fiber product
\begin{equation}\label{fig:fiber-product-Y'}
\begin{tikzcd}
\mathcal{Y}' \arrow{r} \arrow{d} & {Flag_{1,1}^\circ}\times \GL_2 \arrow{d}{\phi}\\
{Flag_{1,1}^\circ}\times \GL_2 \arrow{r}{\phi'} & Quot_2^\circ\times S\times S
\end{tikzcd}
\end{equation}
Then $\mathcal{Y}=\mathcal{Y}'/(P_{1,1}\times P_{1,1})$ and to prove
$\mathcal{Y}$ is reduced, we only need prove that
$\mathcal{Y}'$  is reduced.

By (\ref{fig:fiber-product-Y'}), $\mathcal{Y}'$ contains
\[
\{(X,Y,g;X',Y',g'): X,Y,X',Y'\in B_{1,1}, g,g'\in \GL_2\}
\]
such that
\[
\begin{cases}
(X_{11},Y_{11}) = (X_{22}',Y_{22}') , (X_{22},Y_{22}) = (X_{11}',Y_{11}'), \\
gXg^{-1} = g'X'g'^{-1}, gYg^{-1} = g'Y'g'^{-1},\\
XY = YX, X'Y'=Y'X'
\end{cases}
\]
We replace $h=gg'^{-1}$, then  $\mathcal{Y}'$ contains elements in
\[
\{(X,Y,h;X',Y'): X,Y,X',Y'\in B_{1,1}, g\in \GL_2\}\times GL_2
\]
such that
\[
\begin{cases}
(X_{11},Y_{11}) = (X_{22}',Y_{22}'), (X_{22},Y_{22}) = (X_{11}',Y_{11}') \\
hXh^{-1} = X', hYh^{-1} = Y',\\
XY = YX, X'Y'=Y'X'.
\end{cases}
\]
The condition $X'h = hX$ is equivalent to
\[
\begin{cases}
h_{11}(X_{22}-X_{11}) = h_{12}X_{21},\\
h_{11}X_{21}' = h_{22}X_{21},\\
h_{22}(X_{22}-X_{11}) = h_{12}X_{21}'
\end{cases}
\]
and similar for $Y'h=hY$.
The condition $XY = YX$ is equivalent to
\[
X_{21}(Y_{22}-Y_{11}) = Y_{21}(X_{22} -X_{11}).
\]
We have two cases (since $h\in \GL_2$):
\begin{itemize}
    \item $h_{11}\neq 0$ (i.e., in the open subset $\{h_{11}\neq 0\}\cap \GL_2$). We obtain 
    \[
    \begin{cases}
    X_{22}-X_{11} = \frac{h_{12}}{h_{11}}X_{21}, \\
    X_{21}' = \frac{h_{22}}{h_{11}} X_{21},
    \end{cases} \text{ and }\begin{cases}
    Y_{22}-Y_{11} = \frac{h_{12}}{h_{11}}Y_{21}, \\
    Y_{21}' = \frac{h_{22}}{h_{11}} Y_{21}.
    \end{cases} 
    \]
    These equations cut out an affine space.
    \item $h_{12}\neq 0$. We obtain
    \[
    \begin{cases}
    X_{21}' = \frac{h_{22}}{h_{12}}(X_{22}-X_{11}), \\
    X_{21} =  \frac{h_{11}}{h_{12}}(X_{22}-X_{11}),
    \end{cases} \text{ and } \begin{cases}
    Y_{21}' = \frac{h_{22}}{h_{12}}(Y_{22}-Y_{11}), \\
    Y_{21} =  \frac{h_{11}}{h_{12}}(Y_{22}-Y_{11}).
    \end{cases}
    \]
    These equations cut out an affine space.
  \end{itemize}
  Hence $\mathcal{Y}'$ is smooth and thus reduced.
  
For the scheme $\mathcal{Y}_+$, by applying the similar argument, we define
$\mathcal{Y}'_+$ through the Cartesian diagram:
\[
\begin{tikzcd}
{\mathcal{Y}}_+' \arrow{r} \arrow{d} &  Flag_{x,y,x}^{\circ}\times P_{21} \arrow{d}\\
{Flag_{y,x,x}^{\circ}} \arrow{r}{} & {Flag_{2,1}^\circ} \times S\times S
\end{tikzcd}
\]
and we only need to prove that $\mathcal{Y}_+'$ is reduced. We have
$\mathcal{Y}'_+$ contains the elements in
$$\{(g,X,Y;X',Y')\in P_{21}\times B_{111}^4\}$$
which satisfy:
\[
\begin{cases}
X_{11} =X_{33} = X_{11}' = X_{22}', X_{22} = X_{33}', \\
Y_{11} =Y_{33} = Y_{11}' = Y_{22}', Y_{22} = Y_{33}', \\
X' = gXg^{-1}, Y' = gYg^{-1}, \\
XY-YX=X'Y'-Y'X' = 0.
\end{cases}
\]
The condition $X'g = gX$ is equivalent to 
\begin{align*}
&\begin{pmatrix}
g_{11}X_{11} &  & \\
g_{11}X_{21}' + g_{21}X_{11} & g_{22}X_{11} & g_{23}X_{11} \\
g_{11}X_{31}' + g_{21}X_{32}' + g_{31}X_{22} & g_{22}X_{32}' + g_{32}X_{22} & g_{23}X_{32}' + g_{33}X_{22}
\end{pmatrix}\\
=& \begin{pmatrix}
g_{11}X_{11} &  & \\
g_{21}X_{11} + g_{22}X_{21}+ g_{23}X_{31} & g_{22}X_{22}+g_{23}X_{32} & g_{23}X_{11}\\
g_{31}X_{11} + g_{32}X_{21}+ g_{33}X_{31} & g_{32}X_{22}+g_{33}X_{32} & g_{33}X_{11}
\end{pmatrix}
\end{align*}
Since $g_{11}\neq 0$, we can solve $X_{21}',X_{31}'$:
\[
X_{21}' = \frac{1}{g_{11}}(g_{22}X_{21}+g_{23}X_{31})
\]
\[
X_{31}' = \frac{1}{g_{11}}(g_{31}X_{11}+ g_{32}X_{21}+g_{33}X_{31}-g_{21}X_{32}'-g_{31}X_{22})
\]
The other equations are:
\[
\begin{cases}
g_{22}X_{32}' = g_{33}X_{32}, \\
g_{22}(X_{11}-X_{22}) = g_{23}X_{32}, \\
g_{33}(X_{11}-X_{22}) = g_{23}X_{32}'.
\end{cases}
\]
The condition $XY-YX = 0$ gives
\[
\begin{cases}
X_{21}(Y_{11}-Y_{22}) = Y_{21}(X_{11}-X_{22}), \\
X_{32}(Y_{11}-Y_{22}) = Y_{32}(X_{11}-X_{22}), \\ 
 X_{21}Y_{32} =X_{32}Y_{21}.
\end{cases}
\]
Denote $X_0 = X_{11}-X_{22}$ and $Y_0 = Y_{11} - Y_{22}$. Then $\mathcal{Y}_+'$
is
\[
\Spec \CC[X_{11},X_{21},X_{31},X_{32},X_0,X_{32}',Y_{11},Y_{21},Y_{31},Y_{32},Y_0,Y_{32}',g_{ij}]
\]
cut out by the equations
\begin{align*}
\begin{cases}
g_{22}X_{32}' = g_{33}X_{32}, \\
g_{22}X_0 = g_{23}X_{32}, \\
g_{33}X_0 = g_{23}X_{32}'.
\end{cases},
\begin{cases}
g_{22}Y_{32}' = g_{33}Y_{32}, \\
g_{22}Y_0 = g_{23}Y_{32}, \\
g_{33}Y_0 = g_{23}Y_{32}'.
\end{cases}
\begin{cases}
X_{21}Y_0 = Y_{21}X_0, \\
X_{32}Y_0 = Y_{32}X_0,\\
X_{21}Y_{32} = X_{32}Y_{21}.
\end{cases}
\end{align*}

On the open subset such that $g_{23}\neq 0$, we have $X_{32} = \frac{g_{22}X_0}{g_{23}}$ and $X_{32}' = \frac{g_{33}X_0}{g_{23}}$ and similarly for $Y$'s. The remaining equation is
\[
X_{21}Y_0 = Y_{21}X_0,
\]
and hence reduced.

On the open subset that $g_{22}g_{33}\neq 0$, we have $X_{32}' = \frac{g_{33}X_{32}}{g_{22}}$ and $X_0 = \frac{g_{23}X_{32}}{g_{22}}$ and similarly for $Y$'s. Thus, the remaining equation is
\[
X_{21}Y_{32}=X_{32}Y_{21},
\]
and also reduced.
Since $\det g\neq 0$, we have either $g_{23}\neq 0$ or $g_{22}g_{33}\neq 0$,
thus $\mathcal{Y}'_{+}$ and $\mathcal{Y}$ are reduced. The similar arguments hold for $\mathcal{Y}_{-}$.
\end{proof}

\subsection{A Vanishing Argument}

\begin{definition}
  We define the following morphisms:
  \begin{equation}\label{diag:6}
    \begin{tikzcd}
      \mathcal{Y} \ar{d}{\pi^{\uparrow}} & \mathcal{Y}_{+} \ar{d}{\pi_+^{\uparrow}} &\mathcal{Y}_{-} \ar{d}{\pi_-^{\uparrow}} \\
      \widetilde{Flag_{x,y}^{\circ}} & \widetilde{Flag_{x,y,x}^{\circ}} & \widetilde{Flag_{x,x,y}^{\circ}}
    \end{tikzcd}
  \end{equation}
  obtained by remembering only the top part of the square in
  (\ref{diag:3})-(\ref{diag:5}).
  We also define the following morphisms:
  \begin{equation}\label{diag:7}
    \begin{tikzcd}
      \mathcal{Y} \ar{d}{\pi^{\downarrow}} & \mathcal{Y}_{+} \ar{d}{\pi_+^{\downarrow}} &\mathcal{Y}_{-} \ar{d}{\pi_-^{\downarrow}} \\
      \widetilde{Flag_{y,x}^{\circ}} & \widetilde{Flag_{y,x,x}^{\circ}} & \widetilde{Flag_{x,y,x}^{\circ}}
    \end{tikzcd}
  \end{equation}
   obtained by remembering only the bottom part of the square in
  (\ref{diag:3})-(\ref{diag:5}).
\end{definition}

\begin{prop}\label{prop:Y-embeds-into-P1-bundle} We have the following commutative diagram:
\[
\begin{tikzcd}
\mathcal{Y}\arrow[hook]{r}{\iota^{\uparrow} \text{or }
  \iota^{\downarrow}}\arrow{dr}{\pi^{\uparrow} \text{or} \pi^{\downarrow}} & \PP_{\widetilde{Flag_{1,1}^\circ}} (\mathcal{U}_2) \arrow{d} \\
& \widetilde{Flag_{1,1}^\circ} 
\end{tikzcd}
\]
where $\iota^\uparrow$ and $\iota^{\downarrow}$ are closed embeddings. The same
property holds for the spaces $\mathcal{Y}_+,\mathcal{Y}_{-}$ while replacing
$\widetilde{Flag_{x,y}^{\circ}}$ by the corresponding moduli spaces in
(\ref{diag:6}) and (\ref{diag:7}) and replacing $\mathcal{U}_2$ by
$\mathcal{U}_2$ or $\mathcal{U}_3/\mathcal{U}_1$ respectively.
\end{prop}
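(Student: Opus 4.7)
The plan is to construct the morphisms $\iota^{\uparrow}$ and $\iota^{\downarrow}$ via the universal property of the projective bundle, and then to conclude the closed-embedding claim from the principle that a proper monomorphism is a closed embedding. The same strategy handles the $\mathcal{Y}_{\pm}$ cases with the obvious modifications indicated in the statement.

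First I construct $\iota^{\uparrow}$: on $\mathcal{Y}$ the universal subsheaf $\mathcal{E}_1' \hookrightarrow \mathcal{E}_2$, with $\mathcal{E}_2$ pulled back from $\widetilde{Flag_{1,1}^{\circ}}$ via $\pi^{\uparrow}$, induces (after pushing forward along the $S$-factor) an inclusion of locally free sheaves $\mathcal{U}_1' \hookrightarrow (\pi^{\uparrow})^{*}\mathcal{U}_2$, where $\mathcal{U}_1'$ is a line bundle. By the universal property of $\PP_{\widetilde{Flag_{1,1}^{\circ}}}(\mathcal{U}_2)$, this line subbundle defines the required morphism $\iota^{\uparrow}$ over $\widetilde{Flag_{1,1}^{\circ}}$. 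The map $\iota^{\downarrow}$ is constructed in the same way using $\mathcal{U}_1 \hookrightarrow (\pi^{\downarrow})^{*}\mathcal{U}_2$, and the maps for $\mathcal{Y}_+$ and $\mathcal{Y}_-$ arise analogously from the line subbundles $\mathcal{U}_1' \hookrightarrow \mathcal{U}_2$ and $\mathcal{U}_2'/\mathcal{U}_1 \hookrightarrow \mathcal{U}_3/\mathcal{U}_1$ respectively.

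To see that $\iota^{\uparrow}$ is a closed embedding, I verify it is both proper and a monomorphism. The monomorphism property is clear on $T$-points: the $T$-flat length-$1$ subsheaf $\mathcal{E}_1' \subset \mathcal{E}_2$ is uniquely recovered from the line subbundle $\mathcal{U}_1' \subset \mathcal{U}_2$ as the image of the evaluation $\mathcal{U}_1' \otimes_{\mathcal{O}_T} \mathcal{O}_{T \times S} \to \mathcal{E}_2$, since for a flat family of length-$1$ coherent sheaves the natural map $\mathcal{U}_1' \otimes_{\mathcal{O}_T} \mathcal{O}_{T \times S} \twoheadrightarrow \mathcal{E}_1'$ is surjective. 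For properness: the target is projective over $\widetilde{Flag_{1,1}^{\circ}}$, and $\pi^{\uparrow}: \mathcal{Y} \to \widetilde{Flag_{1,1}^{\circ}}$ is proper because it is the base change of the proper forgetful morphism $\widetilde{Flag_{1,1}^{\circ}} \to Quot_2^{\circ} \times S \times S$ along itself. Since $\iota^{\uparrow}$ commutes with the projections to $\widetilde{Flag_{1,1}^{\circ}}$, it is a morphism between proper $\widetilde{Flag_{1,1}^{\circ}}$-schemes and hence proper, and a proper monomorphism is a closed embedding.

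The hardest step is upgrading the monomorphism claim from geometric points to a scheme-theoretic statement (equivalently, checking that $\iota^{\uparrow}$ is unramified). If the functorial argument above seems insufficient, one can verify this directly using the explicit coordinate equations of $\mathcal{Y}$ from the proof of Lemma \ref{lemma:Y,Y+-_reduced}, comparing them with the equations cutting out the natural closed subscheme of the $\PP^1$-bundle defined by the condition that the tautological line in $\mathcal{U}_2$ generates a length-$1$ subsheaf of $\mathcal{E}_2$; together with the reducedness established in that Lemma this completes the identification.
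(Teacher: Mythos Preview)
Your argument is correct and takes a genuinely different route from the paper. You prove $\iota^{\uparrow}$ is a closed embedding by showing it is a proper monomorphism, with the monomorphism established functorially: the subsheaf $\mathcal{E}_1'\subset\mathcal{E}_2$ is recovered from the line $\mathcal{U}_1'\subset\mathcal{U}_2$ as the image of the evaluation map. The paper instead invokes a fiberwise criterion (Theorem~1.7.8 of \cite{L04}): it suffices that $\iota^{\uparrow}$ restrict to a closed embedding on each closed fiber over $\widetilde{Flag_{1,1}^\circ}$, and this is checked by a case analysis on $\mathcal{E}_2$ (two support points; one support point non-semisimple; or $\mathcal{E}_2\cong k_x^{\oplus 2}$). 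Your approach is cleaner and self-contained, while the paper's fiber analysis has the side benefit of identifying every fiber of $\pi^{\uparrow}$ as a reduced point or $\PP^1$, a fact reused in Proposition~\ref{prop:pushforward-of-OY}. Two small remarks: your hedging in the last paragraph is unnecessary, since the $T$-point recovery already gives a scheme-theoretic monomorphism and no coordinate verification is needed; and $\pi^{\uparrow}$ is not literally the base change of $\widetilde{Flag_{1,1}^\circ}\to Quot_2^\circ\times S\times S$ along \emph{itself} but along the version with the two $S$-factors swapped, though this does not affect the properness conclusion.
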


\begin{proof}
  We will only prove the statement above for $\mathcal{Y}$, as the cases of
  $\mathcal{Y}_-$ and $\mathcal{Y}_+$ hold analogously.
  The inclusion of locally free sheaves $\LL_1'\subset \mathcal{U}_2$ induces the
  morphism $\iota:\mathcal{Y}\to \PP_{\widetilde{Flag_{1,1}^\circ}}$.  We claim that corresponding map $\iota$ is a closed embedding.

By Theorem 1.7.8 of \cite{L04}, it suffices to show that for each closed point
$p\in \widetilde{Flag_{1,1}^\circ}$, the map between the fibers $\iota_p:
\mathcal{Y}_p\to \PP_{\widetilde{Flag_{1,1}^\circ}} (\mathcal{U}_2)_p$ is a
closed embedding. When $\EE_2$ are supported in two different points or
supported in a single point but not semi-simple, $\mathcal{Y}_{p}$ is a closed
point. When $\EE_2$ is semi-simple and supported in a single point, i.e.
$\mathcal{E}_2\cong k_x^{\oplus 2}$ for $x\in S$, then $\mathcal{Y}_{p}=\PP^1$
and $\iota_p$ is an isomorphism.
\end{proof}

\begin{prop}\label{prop:pushforward-of-OY}
The morphism $\pi^{\uparrow}: \mathcal{Y}\to \widetilde{Flag_{1,1}^\circ}$ is proper and satisfies
\[
R^i\pi_*^\uparrow \OO_{\mathcal{Y}} =  \begin{cases}
\OO_{\widetilde{Flag_{1,1}^\circ}}, & \text{ if }i=0,\\
0, & \text{ if }i>0.
\end{cases}
\]
The analogous properties hold for $\pi^{\downarrow}$.  Moreover, the analogous
properties hold with the scheme $\mathcal{Y}$ replaced by the schemes
$\mathcal{Y}_+$ and $\mathcal{Y}_-$,
\end{prop}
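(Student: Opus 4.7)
The plan is to combine the closed embedding $\iota^\uparrow : \mathcal{Y} \hookrightarrow \PP := \PP_{\widetilde{Flag_{1,1}^\circ}}(\mathcal{U}_2)$ from Proposition~\ref{prop:Y-embeds-into-P1-bundle} with the $\PP^1$-bundle projection $\rho : \PP \to \widetilde{Flag_{1,1}^\circ}$. Properness of $\pi^\uparrow = \rho \circ \iota^\uparrow$ is immediate (a closed subscheme of a proper morphism is proper), and the analogous setup yields properness for $\pi^\downarrow, \pi_\pm^\uparrow, \pi_\pm^\downarrow$ via the corresponding cases of the proposition.

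For the vanishing of higher direct images, I apply $R\rho_*$ to the ideal-sheaf exact sequence
\[
0 \to \mathcal{I}_\mathcal{Y} \to \OO_\PP \to \iota_*^\uparrow\OO_\mathcal{Y} \to 0.
\]
Since $\rho$ is a $\PP^1$-bundle, $R^i\rho_* \OO_\PP = 0$ for $i \geq 1$ and $R^j\rho_*\mathcal{F} = 0$ for any coherent $\mathcal{F}$ whenever $j \geq 2$ (the fiber cohomological dimension is $1$). The long exact sequence then immediately yields $R^i\pi_*^\uparrow\OO_\mathcal{Y} = 0$ for all $i \geq 1$.

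The $R^0$ identification $\pi_*^\uparrow\OO_\mathcal{Y} = \OO_{\widetilde{Flag_{1,1}^\circ}}$ requires an analysis of the fibers of $\pi^\uparrow$. Over the dense open $\{x \neq y\} \subset \widetilde{Flag_{1,1}^\circ}$ the decomposition $\mathcal{E}_2 = k_x \oplus k_y$ uniquely determines $\mathcal{E}_1'$, giving a single reduced point fiber; when $x = y$ but $\mathcal{E}_2$ is non-semisimple, $\mathcal{E}_1' = \mathcal{E}_1$ is forced; only over the positive-codimension locus $\{\mathcal{E}_2 \cong k_x^{\oplus 2}\}$ does the fiber become a full $\PP^1$. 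Thus $\pi^\uparrow$ is proper and birational with geometrically connected fibers, and a degeneration argument---each $\PP^1$ fiber arises as a limit of the unique-point fibers from the adjacent open---shows that $\mathcal{Y}$ is irreducible. Since $\mathcal{Y}$ is reduced (Lemma~\ref{lemma:Y,Y+-_reduced}) and $\widetilde{Flag_{1,1}^\circ}$ is normal (normality descends from Lemma~\ref{lemma:Flag_schemes_normal} via the smooth associated-bundle presentation $\widetilde{Flag_{1,1}^\circ} = Flag_{1,1}^\circ \times_{P_{1,1}} \GL_2$), the Stein factorization $\pi^\uparrow = g \circ h$ produces a finite birational morphism $g$ onto a normal integral base, which must be an isomorphism.

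The same argument carries over \emph{mutatis mutandis} to $\pi^\downarrow$ and to $\pi_\pm^\uparrow, \pi_\pm^\downarrow$ via their respective $\PP^1$-bundle embeddings from Proposition~\ref{prop:Y-embeds-into-P1-bundle}. The main obstacle is the fiber analysis in the third step for $\mathcal{Y}_\pm$: the three-step flag structure produces richer degenerate strata, and one must verify in each case that the fiber remains a single reduced point or a $\PP^1$ lying in the closure of the generic locus, so that no extraneous irreducible components of $\mathcal{Y}_\pm$ appear outside the dominant part and the Stein-factorization argument delivers $\pi_{\pm,*}^{\uparrow/\downarrow}\OO = \OO$.
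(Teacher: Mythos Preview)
Your proof is correct and follows essentially the same strategy as the paper: embed $\mathcal{Y}$ into the $\PP^1$-bundle via Proposition~\ref{prop:Y-embeds-into-P1-bundle}, deduce the higher vanishing from the ideal-sheaf long exact sequence together with the cohomological dimension of the $\PP^1$-fibers, and then obtain $R^0\pi_*^\uparrow\OO_\mathcal{Y}=\OO$ from Stein factorization using reducedness of $\mathcal{Y}$, normality of the base, and the point-or-$\PP^1$ fiber description. Your write-up is in fact somewhat more careful than the paper's in spelling out why no extraneous component of $\mathcal{Y}$ can lie over the degenerate locus (the paper leaves this implicit in its Stein-factorization bullet list), though note that a cleaner route to irreducibility is already available from the proof of Lemma~\ref{lemma:Y,Y+-_reduced}, where $\mathcal{Y}'$ is shown to be covered by two overlapping affine charts.
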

\begin{proof} 
By Proposition \ref{prop:Y-embeds-into-P1-bundle}, we can embed $\mathcal{Y}$ into a $\PP^1$-bundle $\PP(\mathcal{U}_2)$ over $\widetilde{Flag_{1,1}^\circ}$. Denote the projection $\pi: \PP(\mathcal{U}_2)\twoheadrightarrow \widetilde{Flag_{1,1}^\circ}$, then
\[
R^i\pi_*(\OO_{\PP(\mathcal{U}_2)}) = 0 \text{ for all } i\geq 1,
\]
and for any coherent sheaf $\FF$ on $\PP(\mathcal{U}_2)$,
\begin{align*}
R^i\pi_*(\FF) = 0 \text{ for all } i\geq 2.
\end{align*}
Now, from the exact sequence
\[
0 \to \mathcal{K} \to \OO_{\PP_2(\mathcal{U}_2)} \to \iota_*\OO_{\mathcal{Y}} \to 0,
\]
(where $\mathcal{K}$ is the kernel sheaf) we obtain the long exact sequence
\[
\cdots \to R^i\pi_*(\OO_{\PP_2(\mathcal{U}_2)}) \to R^i\pi^{\uparrow}_*(\OO_{\mathcal{Y}}) \to R^{i+1}\pi_* (\mathcal{K}) \to \cdots
\]
This implies that $R^i\pi^{\uparrow}_*(\OO_\mathcal{Y}) = 0$ for $i\geq 1$. The $i=0$ case follows from Stein factorization and the following facts:
\begin{itemize}
    \item $\widetilde{Flag_{1,1}^\circ}$ is normal. (Lemma \ref{lemma:Flag_schemes_normal})

    \item $\mathcal{Y}$ is reduced. (Lemma \ref{lemma:Y,Y+-_reduced})

    \item $\pi^{\uparrow}$ is proper and all its fibers are either a point or $\PP^1$. 
\end{itemize}
\end{proof}

\begin{prop}\label{prop:zero-locus-Y}
On $\mathcal{Y}$ or $\mathcal{Y}_+$, the natural map 
\[
\LL_1=\mathcal{U}_{1}\subset \mathcal{U}_2 \to \mathcal{U}_{2}/\mathcal{U}_{1}'=\LL_2'
\]
induces a global section of $\LL_{2}'\otimes \LL_1^{-1}$, with the zero section consisting of the data
\[
\{(\EE_1,x) = (\EE_1',y)\} \subset \mathcal{Y},
\]
which is isomorphic to $\widetilde{Flag_{x,x}^\circ}$ or $\widetilde{Flag_{x,x,x}^\circ}$.

Analogously, on $\mathcal{Y}_{-}$, there is a global section of
$\LL_{3}'\otimes \LL_2^{-1}$ such that the zero section is isomorphic to $\widetilde{Flag_{x,x,x}^\circ}$.

\end{prop}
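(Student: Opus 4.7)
The plan is to construct the section directly from the two natural line subbundles of $\mathcal{U}_{2}$, identify its vanishing locus moduli-theoretically, and then upgrade this to a scheme-theoretic identification with $\widetilde{Flag_{x,x}^{\circ}}$ (respectively $\widetilde{Flag_{x,x,x}^{\circ}}$).

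First, on $\mathcal{Y}$, both $\LL_{1}=\mathcal{U}_{1}$ and $\LL_{1}'=\mathcal{U}_{1}'$ are line subbundles of the rank-two bundle $\mathcal{U}_{2}$, coming from the top and bottom flags in diagram (\ref{diag:3}). The composition
\[
\LL_{1} \hookrightarrow \mathcal{U}_{2} \twoheadrightarrow \mathcal{U}_{2}/\mathcal{U}_{1}' = \LL_{2}'
\]
is a morphism of line bundles, equivalently a global section $s \in H^{0}(\mathcal{Y},\LL_{2}'\otimes \LL_{1}^{-1})$. Fiberwise, $s$ vanishes iff $\LL_{1}\subset \LL_{1}'$, and two rank-one subbundles of a rank-two bundle satisfying such a containment must coincide; thus $\LL_{1}=\LL_{1}'$, i.e.\ $\mathcal{E}_{1}=\mathcal{E}_{1}'$ as subsheaves of $\mathcal{E}_{2}$. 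Since $\mathcal{E}_{2}/\mathcal{E}_{1}$ is supported at $y$ and $\mathcal{E}_{2}/\mathcal{E}_{1}'$ at $x$, this forces $x=y$, and the resulting datum $0\subset_{x}\mathcal{E}_{1}\subset_{x}\mathcal{E}_{2}$ is exactly what is classified by $\widetilde{Flag_{x,x}^{\circ}}$.

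To upgrade to a scheme-theoretic isomorphism, I would note that the natural map $\widetilde{Flag_{x,x}^{\circ}}\to \mathcal{Y}$ sending $(0\subset \mathcal{E}_{1}\subset \mathcal{E}_{2})$ to the diagram with $\mathcal{E}_{1}':=\mathcal{E}_{1}$ factors through $Z(s)$, and that on $Z(s)$ the universal data satisfies $\mathcal{U}_{1}=\mathcal{U}_{1}'$, so forgetting $\mathcal{E}_{1}'$ yields an inverse at the level of moduli functors. Alternatively, using Proposition \ref{prop:Y-embeds-into-P1-bundle}, one can realize $s$ as the restriction of the tautological section on $\PP_{\widetilde{Flag_{1,1}^{\circ}}}(\mathcal{U}_{2})$ whose zero scheme is the section $\LL_{1}\subset \mathcal{U}_{2}$, i.e.\ $\widetilde{Flag_{1,1}^{\circ}}$ itself; then $Z(s)$ is the scheme-theoretic intersection of $\mathcal{Y}$ with this section inside the ambient $\PP^{1}$-bundle, which on the moduli level is $\widetilde{Flag_{x,x}^{\circ}}$.

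For $\mathcal{Y}_{+}$ the same section $\LL_{1}\to \LL_{2}'$ of $\LL_{2}'\otimes \LL_{1}^{-1}$ works, and the same rank argument gives $\mathcal{E}_{1}=\mathcal{E}_{1}'$ and $x=y$; because the extra step $\mathcal{E}_{2}\subset_{x}\mathcal{E}_{3}$ is common to both branches of (\ref{diag:4}), the zero scheme is $\widetilde{Flag_{x,x,x}^{\circ}}$. For $\mathcal{Y}_{-}$ the roles are dualized: by convention $\mathcal{U}_{1}=\mathcal{U}_{1}'$ and $\mathcal{U}_{3}=\mathcal{U}_{3}'$, and one considers the composition
\[
\LL_{2} = \mathcal{U}_{2}/\mathcal{U}_{1} \hookrightarrow \mathcal{U}_{3}/\mathcal{U}_{1} \twoheadrightarrow \mathcal{U}_{3}/\mathcal{U}_{2}' = \LL_{3}',
\]
giving a section of $\LL_{3}'\otimes \LL_{2}^{-1}$; the analogous rank argument for rank-two subbundles of the rank-three $\mathcal{U}_{3}$ yields $\mathcal{U}_{2}=\mathcal{U}_{2}'$, hence $\mathcal{E}_{2}=\mathcal{E}_{2}'$, forcing $x=y$ and identifying the zero scheme with $\widetilde{Flag_{x,x,x}^{\circ}}$.

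The main technical subtlety I expect is the scheme-theoretic (rather than merely set-theoretic) identification of the zero locus; the moduli-functor inverse above is clean, but the safest justification is the $\PP^{1}$-bundle argument of the previous paragraph, where the zero scheme of the tautological section is manifestly the image of a section and hence reduced.
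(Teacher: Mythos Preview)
Your argument is correct and in spirit matches the paper's, though your presentation is more thorough. The paper's own proof is a three-line case analysis on the structure of $\mathcal{E}_{2}$: when $\mathcal{E}_{2}$ is supported at two distinct points the map $\LL_{1}\to\LL_{2}'$ is the identity; when $\mathcal{E}_{2}$ is supported at one point and is not semisimple it is always zero (and then $\mathcal{E}_{1}=\mathcal{E}_{1}'$ automatically, since there is a unique length-one subsheaf); and when $\mathcal{E}_{2}\cong k_{x}^{\oplus 2}$ the map vanishes precisely when $\mathcal{E}_{1}=\mathcal{E}_{1}'$. Your uniform rank argument (two line subbundles of a rank-two bundle with one contained in the other must coincide) reaches the same set-theoretic conclusion without splitting into cases, and your discussion of the scheme-theoretic identification via the moduli-functor inverse or via the $\PP^{1}$-bundle embedding of Proposition~\ref{prop:Y-embeds-into-P1-bundle} is more careful than the paper, which does not address that point explicitly.
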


\begin{proof} We will only prove the case on $\mathcal{Y}$. The morphism is identity when $\mathcal{E}_2$ is supported in two
  different points and always zero when $\mathcal{E}_2$ is supported in one
  point and not semisimple. When $\mathcal{E}_2$ is supported in one
  point and semisimple, then the morphism vanishes if and only if $\EE_1=\EE_1'$.
\end{proof}

\section{$K$-theoretic Hall algebra on Surfaces}
In this section, we review the $K$-theoretic Hall algebra on surfaces in
\cite{zhao2019k}. Given a reductive group $G$ acting on $X$, we denote $K_G(X)$
the $G$-equivariant Grothendieck groups of coherent sheaves on $X$.

\subsection{Refined Gysin maps and Derived Fiber Squares}
A morphism $f:X\to Y$ is called a \textit{local complete intersection} (l.c.i.) morphism if $f$ is the composition of a regular embedding and a smooth morphism. In this case, $f$ has finite Tor dimension.

\begin{definition}
Suppose we have a Cartesian diagram
\[
\begin{tikzcd}
 X' \arrow{r}\arrow{d}{f'} & X \arrow{d}{f} \\
 Y' \arrow{r} & Y 
\end{tikzcd}
\]
where $f$ is a l.c.i. morphism. The refined Gysin map $f^!: K(Y')\to K(X')$ is defined by
\[
f^!([\FF]) = \sum_{i}(-1)^i [\text{Tor}_i^{\OO_Y}(\OO_X,\FF)].
\]
\end{definition}

Consider a Cartesian diagram
\begin{equation}\label{fig:derived-square}
    \begin{tikzcd}
X' \arrow[hook]{r}{g} \arrow{d}{f'} & X \arrow{d}{f} \\
Y' \arrow[hook]{r}{g'} & Y
\end{tikzcd}
\end{equation}
where $f: X\to Y$ is a l.c.i. morphism and $g': Y'\to Y$ is a closed embedding.

\begin{definition}
In diagram (\ref{fig:derived-square}), if $X'$ has the expected dimension, i.e., if
\[
\dim X' - \dim Y' = \dim X - \dim Y,
\]
then we say that (\ref{fig:derived-square}) is a \textit{derived fiber square}.
\end{definition}

A derived fiber square has the following property:
\begin{prop}\label{prop:refined_Gysin_map_agrees_with_usual_pullback}
Suppose (\ref{fig:derived-square}) is a derived fiber square, and $Y'$ is
Cohen-Macaulay. Then $X$ is Cohen-Macaulay and $f'^*([\mathcal{F}])=f^!([\mathcal{F}])$ for any locally free
sheaf $\mathcal{F}$ on $Y'$.
\end{prop}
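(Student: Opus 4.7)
The plan is to reduce to the case where $f$ is a regular embedding and then run a local Koszul resolution argument, exploiting the Cohen-Macaulay hypothesis on $Y'$. Since $f$ is l.c.i., factor it as $f = \pi \circ i$ with $i \colon X \hookrightarrow Z$ a regular embedding and $\pi \colon Z \to Y$ smooth. Base-changing along $g'$ yields the composite diagram
\[
\begin{tikzcd}
X' \arrow{r}{i'} \arrow{d} & Z' \arrow{r}{\pi'} \arrow{d} & Y' \arrow{d}{g'} \\
X \arrow{r}{i} & Z \arrow{r}{\pi} & Y.
\end{tikzcd}
\]
Because $\pi$ is smooth (hence flat), $Z'$ is smooth over the Cohen-Macaulay scheme $Y'$ and therefore itself Cohen-Macaulay, while $\pi^! = \pi'^*$. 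A short dimension count using the expected-dimension hypothesis on $X'$ shows that $X'$ still has the expected codimension in $Z'$. By the standard multiplicativity $f^! = i^! \circ \pi^!$ of refined Gysin maps, together with $\pi^!([\FF]) = [\pi'^*\FF]$ for locally free $\FF$, the proposition reduces to the case of the regular embedding $i$ applied to the locally free sheaf $\pi'^*\FF$ on $Z'$.

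Now assume $f$ is itself a regular embedding of codimension $c$. Locally on $Y$, $X$ is cut out by a regular sequence $(f_1,\ldots,f_c)$, and the Koszul complex $K_\bullet(f_1,\ldots,f_c)$ gives a locally free resolution of $\OO_X$ over $\OO_Y$. The sheaves $\text{Tor}_i^{\OO_Y}(\OO_X,g'_*\FF)$ are therefore computed by $K_\bullet(f_1,\ldots,f_c) \otimes_{\OO_Y} g'_*\FF$, which coincides with the pushforward of the restricted Koszul complex $K_\bullet(\bar f_1,\ldots,\bar f_c) \otimes_{\OO_{Y'}} \FF$ on $Y'$, where $\bar f_i := f_i|_{Y'}$. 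The sequence $(\bar f_1,\ldots,\bar f_c)$ cuts out $X'$ in $Y'$ with codimension exactly $c$ by the expected-dimension hypothesis, and since $Y'$ is Cohen-Macaulay the classical criterion (e.g.\ Matsumura, Theorem 17.4) guarantees that this sequence is regular on $Y'$. Two conclusions follow: first, $X'$ is a local complete intersection in $Y'$ and hence Cohen-Macaulay; second, the restricted Koszul complex is a locally free resolution of $\OO_{X'}$, and tensoring with the locally free $\FF$ preserves its exactness in positive degrees. Therefore $\text{Tor}_i^{\OO_Y}(\OO_X,\FF) = 0$ for $i > 0$ and $\text{Tor}_0 = f'^*\FF$, which gives the identity $f^!([\FF]) = [f'^*\FF]$.

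The main obstacle is the commutative-algebra input at the heart of the argument: the fact that in a Cohen-Macaulay ring a sequence of the correct length generating an ideal of the expected codimension is automatically regular. This is standard but indispensable, since without the Cohen-Macaulay hypothesis on $Y'$ the restricted sequence could fail to be regular, the higher Tors would not vanish, and $X'$ would not need to be Cohen-Macaulay. The remaining bookkeeping --- namely the l.c.i.\ factorization and the verification that the expected-dimension condition transfers to the intermediate square over $Z'$ --- is routine dimension chasing.
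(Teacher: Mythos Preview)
Your proof is correct and follows essentially the same approach as the paper: reduce to a regular embedding, work locally, and use the Cohen--Macaulay hypothesis on $Y'$ to guarantee that the defining equations form a regular sequence, whence the higher Tors vanish and $X'$ is Cohen--Macaulay. The only difference is cosmetic: the paper further reduces to codimension one (citing Lemma~2.8 of \cite{zhao2019k}) and then invokes the single non-zero-divisor fact, whereas you treat arbitrary codimension in one step via the Koszul complex and Matsumura's criterion.
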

\begin{proof}
  Without the loss of generality, we assume that $f$ is a regular embedding. The
  problem is local, and thus we assume that $Y=Spec A$ for a local ring $A$, and
  $X=Spec A/(f_1,\cdots,f_k)$ for some $f_1,\cdots f_k$ in $A$ such that
  $k=dim Y-dim X$. Moreover, by Lemma 2.8 of \cite{zhao2019k}, we can assume
  that $k=1$. Let $Y'=Spec B$ for another Cohen-Macaulay local ring. By Krull's
  principal ideal theorem, $f_1$ is a non-zero divisor in $B$ and thus
  $Tor^i_A(\mathcal{F},A/(f_1))=0$ if $i>0$ when $\mathcal{F}$ is locally free.
\end{proof}

\subsection{$K$-theoretic Hall Algebra on Surfaces}
The $K$-theoretic Hall algebra on $S$ \cite{zhao2019k} is an associative algebra
structure on
\[
K(Coh)=\bigoplus_{n=0}^\infty K_{\GL_n}(Quot_n^\circ).
\]
The algebra structure is given as follows:

\begin{itemize}
\item For any two non-negative integers $n,m$, there is a linear morphism of
  locally free sheaves $\psi_{n,m}:W_{n,m}\to V_{n,m}$ on $Quot_{n,m}^{\circ}$
  with the Cartesian diagram:
  \[
\begin{tikzcd}
Flag_{n,m}^\circ \arrow{r}\arrow{d} & W_{n,m}\arrow{d}{\psi_{n,m}}\\
Quot_n^\circ\times Quot_m^\circ \arrow{r} & V_{n,m}
\end{tikzcd}
\]
which induces the refined Gysin map:
\[\psi_{n,m}:K_{GL_n}(Quot_n)^{\circ}\times K_{GL_m}(Quot_m^{\circ})\to
  K_{P_{n,m}}(Flag_{n,m}^{\circ}).\]
Moreover, $$dim W_{n,m}-dim
V_{n,m}=hom(\mathcal{I}^n,\mathcal{E}_m)-ext^1(\mathcal{I}^n,\mathcal{E}_m)=nm$$
by the Grothendieck-Riemann-Roch formula.
\item Formula (5.2.17) of \cite{chriss2009representation} induces the
  isomorphism:
\[ind_{P_{n,m}}^{GL_{n+m}}:  K_{P_{n,m}}(Flag_{n,m}^{\circ})\cong
  K_{GL_{n+m}}(\widetilde{Flag_{n,m}^{\circ}})\]
\item The morphism $q_{n,m}:\widetilde{Flag_{n,m}^{\circ}}\to
  Quot_{n+m}^{\circ}$ is
  proper and induces a push forward morphism:
  $q_{n,m*}:K_{GL_{n+m}}(\widetilde{Flag_{n,m}^{\circ}})\to
  K_{GL_{n+m}}(Quot_{n+m}^{\circ})$.
\item We define the algebraic structure $*:K_{GL_n}(Quot_n^{\circ})\times
  K_{GL_m}(Quot_m^{\circ})\to K_{GL_{n+m}}(Quot_{n+m}^{\circ})$ as the
  composition of $\psi_{n,m}^{!},ind_{P_{n,m}}^{GL_{n+m}}$ and $q_{n,m*}$.
\end{itemize}
\begin{definition}
  We define the $K$-theoretic class $e_{(d_1,\ldots,d_n)}\in
  K_{GL_n}(Quot_n^{\circ})$ when $n\leq 3$ by  $e_k=[z^k\mathcal{O}_S]$ where
  $z$ is the standard character  of $\mathbb{G}_{m}$ and 
  \begin{align*}
       e_{k_1,k_2}=q_{1,1*}(\mathcal{L}_1^{k_1}\mathcal{L}_{2}^{k_2}\mathcal{O}_{\widetilde{Flag_{x,x}^\circ}}); \\
    e_{k_1,k_2,k_3}=q_{1,1,1*}(\mathcal{L}_1^{k_1}\mathcal{L}_{2}^{k_2}\mathcal{L}_3^{k_3}\mathcal{O}_{\widetilde{Flag_{x,x,x}^\circ}})
  \end{align*}
 \end{definition}

\begin{prop}\label{prop:commutator_relation_n=2}
  Given two integers $d\geq k$,
  \[
[e_{d},e_{k}] := e_{d} * e_{k} - e_{k}*e_{d} = \sum_{a=k}^{d-1} e_{a,d+k-a} \in K_{\GL_2}(Quot_2^\circ).
\]
\end{prop}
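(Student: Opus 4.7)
The plan is to first unpack both products. Since $Flag_{1,1}^\circ$ is Cohen--Macaulay of the correct dimension (Lemma~\ref{lemma:Flag_schemes_Cohen_Macaulay}), Proposition~\ref{prop:refined_Gysin_map_agrees_with_usual_pullback} guarantees that the refined Gysin pullback of $e_i\boxtimes e_j=[z_1^iz_2^j\OO_{S\times S}]$ coincides with the ordinary pullback $\LL_1^i\LL_2^j\OO_{\widetilde{Flag_{1,1}^\circ}}$. Consequently
\[
[e_d,e_k] \;=\; q_{1,1*}\bigl(\LL_1^d\LL_2^k - \LL_1^k\LL_2^d\bigr)\in K_{GL_2}(Quot_2^\circ).
\]

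Next I would lift the computation to the quadruple moduli space $\mathcal{Y}$ via $\pi^{\uparrow}$. By Proposition~\ref{prop:pushforward-of-OY}, $R\pi^{\uparrow}_{*}\OO_{\mathcal{Y}}=\OO$, so the projection formula identifies $q_{1,1*}$ with $q_{\mathcal{Y}*}\circ\pi^{\uparrow*}$ on pullback classes. A symmetric argument using $\pi^{\downarrow}$, whose target is $\widetilde{Flag_{y,x}^\circ}=\widetilde{Flag_{1,1}^\circ}$, shows
\[
q_{\mathcal{Y}*}(\LL_1^k\LL_2^d) \;=\; q_{\mathcal{Y}*}(\LL_1'^{\,k}\LL_2'^{\,d}),
\]
since both sides compute $e_k*e_d$. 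Thus $[e_d,e_k] = q_{\mathcal{Y}*}(\LL_1^d\LL_2^k - \LL_1'^{\,k}\LL_2'^{\,d})$.

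The inclusion $\mathcal{Y}\hookrightarrow\PP(\mathcal{U}_2)$ of Proposition~\ref{prop:Y-embeds-into-P1-bundle} forces the determinant relation $\LL_1\LL_2=\LL_1'\LL_2'=\det\mathcal{U}_2$ on $\mathcal{Y}$. Substituting $\LL_2'=\LL_1\LL_2/\LL_1'$ and telescoping yields
\[
\LL_1^d\LL_2^k - \LL_1'^{\,k}\LL_2'^{\,d} \;=\; (\LL_1'-\LL_2)\cdot\LL_1^d\sum_{j=0}^{d-k-1}\LL_2^{k+j}\LL_1'^{\,-j-1}.
\]
The composition $\LL_1'\hookrightarrow\mathcal{U}_2\twoheadrightarrow\LL_2$ is a section of $\LL_2\otimes\LL_1'^{\,-1}$ cutting out $\widetilde{Flag_{x,x}^\circ}\subset\mathcal{Y}$ (Proposition~\ref{prop:zero-locus-Y}), producing the short exact sequence $0\to\LL_1'\LL_2^{-1}\to\OO_{\mathcal{Y}}\to\OO_{\widetilde{Flag_{x,x}^\circ}}\to 0$ and hence the identity $\LL_2-\LL_1'=\LL_2\cdot[\OO_{\widetilde{Flag_{x,x}^\circ}}]$ in $K_{GL_2}(\mathcal{Y})$. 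Substituting and applying the projection formula $q_{\mathcal{Y}*}([\OO_{\widetilde{Flag_{x,x}^\circ}}]\alpha)=q_{D*}(\alpha|_{\widetilde{Flag_{x,x}^\circ}})$ reduces the commutator to a sum of pushforwards from $\widetilde{Flag_{x,x}^\circ}$. Because the two flags coincide on the diagonal one has $\LL_1=\LL_1'$ and $\LL_2=\LL_2'$ there, and the substitution $a=d-j-1$ turns the sum into exactly $\sum_{a=k}^{d-1}e_{a,d+k-a}$.

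The hardest step will be combining Proposition~\ref{prop:zero-locus-Y} with the determinant relation on $\mathcal{Y}$ while keeping careful track of the four distinct equivariant line bundles $\LL_1,\LL_2,\LL_1',\LL_2'$, which agree only after restriction to $\widetilde{Flag_{x,x}^\circ}$; the telescoping manipulation must take place globally in $K_{GL_2}(\mathcal{Y})$ before the projection formula is applied, and the sign of the final expression has to be tracked through each rewriting.
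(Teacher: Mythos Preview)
Your proposal follows the paper's proof essentially step for step: reduce to $\widetilde{Flag_{1,1}^\circ}$ via Cohen--Macaulayness and Proposition~\ref{prop:refined_Gysin_map_agrees_with_usual_pullback}, lift to $\mathcal{Y}$ using $R\pi^{\uparrow}_*\OO_{\mathcal{Y}}=\OO$ (Proposition~\ref{prop:pushforward-of-OY}), invoke the determinant relation $\LL_1\LL_2=\LL_1'\LL_2'$, telescope, and replace the linear factor by $[\OO_{\widetilde{Flag_{x,x}^\circ}}]$ times a line bundle via Proposition~\ref{prop:zero-locus-Y}. The only cosmetic difference is that the paper factors out $\LL_1-\LL_2'$ using the section $\LL_1\to\LL_2'$ of Proposition~\ref{prop:zero-locus-Y}, whereas you factor out $\LL_1'-\LL_2$ using the symmetric section $\LL_1'\to\LL_2$; since $\LL_2'\LL_1^{-1}\cong\LL_2\LL_1'^{-1}$ under the determinant relation, these are equivalent.

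One caution on the bookkeeping you flag at the end: with your conventions the exact sequence reads $\LL_2-\LL_1'=\LL_2\cdot[\OO_{\widetilde{Flag_{x,x}^\circ}}]$, while your telescoped expression carries the factor $(\LL_1'-\LL_2)$, so a literal substitution produces $-\sum_{a=k}^{d-1}e_{a,d+k-a}$. You should either reverse the telescoping (write $\LL_1'^{\,k}\LL_2'^{\,d}-\LL_1^d\LL_2^k$ and then negate), or factor as the paper does with $\LL_1-\LL_2'$; in either case the sign comes out right, but as written it does not quite close.
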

\begin{proof}
  By Lemma \ref{lemma:Flag_schemes_Cohen_Macaulay}, $Flag_{x,y}^{\circ}$ is
  Cohen-Macaulay of expected dimensions. By Proposition \ref{prop:refined_Gysin_map_agrees_with_usual_pullback}, we have
  \begin{align*}
    e_d*e_k=q_{1,1*}(\mathcal{L}_1^d\mathcal{L}_2^k)=q_{1,1*}\pi^\uparrow_*(\mathcal{L}_1^d\mathcal{L}_2^k) \\
    e_ke_d=q_{1,1*}(\mathcal{L}_1^k\mathcal{L}_2^l)=q_{1,1*}\pi^{\downarrow}_*(\mathcal{L}_1'^k\mathcal{L}_2'^{d})= q_{1,1*}\pi^{\uparrow}_*(\mathcal{L}_1'^k\mathcal{L}_2'^{d})\\
    e_{i,j}=q_{1,1*}\pi^{\uparrow}_*(\mathcal{L}_1^i\mathcal{L}_2^{j}\mathcal{O}_{\widetilde{Flag_{x,x}^\circ}})
  \end{align*}
  
  Hence we only need to prove that
  $$[\LL_1^d\LL_2^k]-[\LL_1'^k\LL_2'^d]= \sum_{i=0}^{d-k-1}
  [\LL_1^{k+i}\LL_2'^{d-i}\mathcal{O}_{\widetilde{Flag_{x,x}^\circ}}]$$ on $K_{GL_2}(\mathcal{Y})$.

By Proposition \ref{prop:zero-locus-Y}, there is an exact sequence of $\GL_2$-equivariant coherent sheaves on $\mathcal{Y}$:
\[
0\to \LL_1'^{-1}\otimes \LL_2 \to \OO_\mathcal{Y} \to \OO_{\widetilde{Flag_{x,x}^\circ}} \to 0
\]
Since $\LL_1\LL_2 \cong \LL_1'\LL_2'$, on $K_{\GL_2}(\mathcal{Y})$ we have
\[
[\LL_1]-[\LL_2'] = [\OO_{\widetilde{Flag_{x,x}^\circ}}\otimes \LL_2'] = [\OO_{\widetilde{Flag_{x,x}^\circ}}][\LL_2']
\]
From this, we can calculate on $K_{\GL_2}(\mathcal{Y})$ that
\begin{align*}
 [\LL_1^d\LL_2^k]-[\LL_1'^k\LL_2'^d] &= [\LL_1^k\LL_2^k]([\LL_1]^{d-k}-[\LL_2']^{d-k}) \\
 &= [\LL_1^k\LL_2^k]([\LL_1]-[\LL_2'])\sum_{i=0}^{d-k-1} [\LL_1^i\LL_2'^{d-k-i-1}] \\
                                     &= [\LL_1^k\LL_2^k][\LL_2'][\OO_{\widetilde{Flag_{x,x}^\circ}} ]\sum_{i=0}^{d-k-1} [\LL_1^i\LL_2'^{d-k-i-1}] \\
   &= \sum_{i=0}^{d-k-1} [\LL_1^{k+i}\LL_2^{d-i}\OO_{\widetilde{Flag_{x,x}^\circ}} ]
\end{align*}
as on $\widetilde{Flag_{x,x}^\circ}$, $\LL_1' \cong \LL_1$ and $\LL_2' \cong \LL_2$.
\end{proof}

\begin{prop}\label{prop:commutator_relation_n=3}
Suppose $d_1,d_2,k$ are integers, then
\[
[e_{d_1,d_2},e_{k}] =\begin{cases}
-\sum_{a=d_1}^{k-1}e_{a,d_1+k-a,d_2}, &\text{ if }k \geq d_1,\\
\sum_{a=k}^{d_1-1}e_{a,d_1+k-a,d_2}, &\text{ if }k < d_1.
\end{cases} + \begin{cases}
-\sum_{a=d_2}^{k-1}e_{d_1, a, d_2+k-a}, &\text{ if }k \geq d_2,\\
\sum_{a=k}^{d_2-1}e_{d_1, a, d_2+k-a}, &\text{ if }k < d_2.
\end{cases}
\]
\end{prop}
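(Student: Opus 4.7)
The plan is to repeat the ``swap plus correction'' argument of Proposition \ref{prop:commutator_relation_n=2} twice, first using the correspondence $\mathcal{Y}_-$ (which trades the last two slots of the flag) and then $\mathcal{Y}_+$ (which trades the first two slots). Each swap will use the Koszul resolution of the smallest diagonal $\widetilde{Flag_{x,x,x}^\circ}$ supplied by Proposition \ref{prop:zero-locus-Y} and should contribute exactly one of the two sums in the statement: the $\mathcal{Y}_-$ pass produces the $e_{d_1,a,d_2+k-a}$ family (with $d_1$ in the untouched first slot) and the $\mathcal{Y}_+$ pass produces the $e_{a,d_1+k-a,d_2}$ family (with $d_2$ in the untouched last slot). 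The four sign/range cases of the statement arise from whether $k\geq d_i$ or $k<d_i$ in each of the two swaps, exactly in parallel with the two cases of Proposition \ref{prop:commutator_relation_n=2}.

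First I would rewrite
\begin{align*}
e_{d_1,d_2}*e_k &= q_{d_{\bullet},*}\bigl(\LL_1^{d_1}\LL_2^{d_2}\LL_3^k\cdot\OO_{\widetilde{Flag_{x,x,y}^\circ}}\bigr), \\
e_k*e_{d_1,d_2} &= q_{d_{\bullet},*}\bigl(\LL_1^k\LL_2^{d_1}\LL_3^{d_2}\cdot\OO_{\widetilde{Flag_{y,x,x}^\circ}}\bigr).
\end{align*}
These identifications go exactly as the identity $e_d*e_k = q_{1,1*}(\LL_1^d\LL_2^k)$ does in Proposition \ref{prop:commutator_relation_n=2}, using Proposition \ref{prop:refined_Gysin_map_agrees_with_usual_pullback} to replace refined Gysin pullbacks by ordinary restrictions along the Cohen--Macaulay/Gorenstein flag schemes of Lemma \ref{lemma:Flag_schemes_Cohen_Macaulay}.

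Then the $\mathcal{Y}_-$ step goes as follows. Proposition \ref{prop:pushforward-of-OY} and the projection formula give $e_{d_1,d_2}*e_k = q_{d_{\bullet},*}\pi_{-,*}^{\uparrow}(\LL_1^{d_1}\LL_2^{d_2}\LL_3^k)$. On $\mathcal{Y}_-$ one has $\LL_1=\LL_1'$ and $\LL_2\LL_3\cong\LL_2'\LL_3'$, and Proposition \ref{prop:zero-locus-Y} supplies a Koszul sequence relating $\LL_2$ and $\LL_3'$ modulo $\OO_{\widetilde{Flag_{x,x,x}^\circ}}$. Running the same algebraic manipulation as in Proposition \ref{prop:commutator_relation_n=2}---now in the variables $(\LL_2,\LL_3)$, with $\LL_1^{d_1}$ a passive multiplier---rewrites $\LL_1^{d_1}\LL_2^{d_2}\LL_3^k$ as $\LL_1^{d_1}\LL_2'^k\LL_3'^{d_2}$ plus a $\pm$ sum of terms of the form $\LL_1^{d_1}\LL_2^a\LL_3^{d_2+k-a}\OO_{\widetilde{Flag_{x,x,x}^\circ}}$. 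Pushing down via $\pi_{-,*}^{\downarrow}$ yields
$$e_{d_1,d_2}*e_k = q_{d_{\bullet},*}\bigl(\LL_1^{d_1}\LL_2^k\LL_3^{d_2}\cdot\OO_{\widetilde{Flag_{x,y,x}^\circ}}\bigr) + \sigma_2,$$
where $\sigma_2$ is the second sum of the statement. The analogous step on $\mathcal{Y}_+$, using the section of $\LL_2'\otimes\LL_1^{-1}$ from Proposition \ref{prop:zero-locus-Y} and the relation $\LL_1\LL_2\cong\LL_1'\LL_2'$ (with $\LL_3=\LL_3'$ now the spectator), converts the intermediate $\widetilde{Flag_{x,y,x}^\circ}$ term into $e_k*e_{d_1,d_2}$ plus the first sum $\sigma_1$. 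Subtracting then gives $[e_{d_1,d_2},e_k]=\sigma_1+\sigma_2$.

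The main obstacle is the bookkeeping required to verify that the two $\pm$ corrections have exactly the signs and index ranges asserted by the statement in each of the four cases $k\gtreqless d_1$, $k\gtreqless d_2$. The only nontrivial preliminary is Step~1, which requires checking that the fiber diagrams implicit in the Hall-algebra product are derived in the sense of Proposition \ref{prop:refined_Gysin_map_agrees_with_usual_pullback}; once those identifications are in place the argument is formal, being two iterations of the one-step manipulation of Proposition \ref{prop:commutator_relation_n=2}.
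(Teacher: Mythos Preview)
Your proposal is correct and follows exactly the paper's approach: introduce the intermediate class $f_{d_1,k,d_2}=q_{1,1,1*}(\LL_1^{d_1}\LL_2^{k}\LL_3^{d_2}\OO_{\widetilde{Flag_{x,y,x}^\circ}})$, then use $\mathcal{Y}_-$ and $\mathcal{Y}_+$ to compare $e_{d_1,d_2}*e_k$ with $f_{d_1,k,d_2}$ and $f_{d_1,k,d_2}$ with $e_k*e_{d_1,d_2}$, each step being a direct repetition of the manipulation in Proposition~\ref{prop:commutator_relation_n=2}. Your attribution of which correspondence yields which sum ($\mathcal{Y}_-$ fixes $\LL_1$ and gives the $e_{d_1,a,d_2+k-a}$ terms, $\mathcal{Y}_+$ fixes $\LL_3$ and gives the $e_{a,d_1+k-a,d_2}$ terms) is in fact more carefully stated than in the paper's terse write-up.
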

\begin{proof}
As
$Flag_{x,x}^{\circ},Flag_{x,y}^{\circ},Flag_{x,y,z}^{\circ},Flag_{x,y,x}^{\circ},Flag_{x,x,y}^{\circ},Flag_{y,x,x}^{\circ}$
are all Cohen-Macaulay of expected dimension, we have
\begin{align*}
  e_{d_1,d_2}*e_k=p_{1,1,1*}([\mathcal{L}_{1}^{d_1}\mathcal{L}_2^{d_2}\mathcal{L}_{3}^k\mathcal{O}_{\widetilde{Flag_{x,x,y}^{\circ}}}]) \\
  e_{k}*e_{d_1,d_2}=p_{1,1,1*}([\mathcal{L}_{1}^{k}\mathcal{L}_2^{d_1}\mathcal{L}_{3}^{d_2}\mathcal{O}_{\widetilde{Flag_{y,x,x}^{\circ}}}])
\end{align*}
We define
$f_{d_1,k,d_2}=p_{1,1,1*}([\mathcal{L}_{1}^{d_1}\mathcal{L}_2^{k}\mathcal{L}_{3}^{d_2}\mathcal{O}_{\widetilde{Flag_{x,y,x}^{\circ}}}])$,
then by the similar argument as Proposition \ref{prop:commutator_relation_n=2}, we have 
\begin{align*}
  e_{d_1,d_2}*e_{k}-f_{d_1,k,d_2} =\begin{cases}
-\sum_{a=d_1}^{k-1}e_{a,d_1+k-a,d_2}, &\text{ if }k \geq d_1,\\
\sum_{a=k}^{d_1-1}e_{a,d_1+k-a,d_2}, &\text{ if }k < d_1.
\end{cases}
\end{align*}
\begin{align*}
  f_{d_1,k,d_2}-e_k*e_{d_1,d_2}= \begin{cases}
-\sum_{a=d_2}^{k-1}e_{d_1, a, d_2+k-a}, &\text{ if }k \geq d_2,\\
\sum_{a=k}^{d_2-1}e_{d_1, a, d_2+k-a}, &\text{ if }k < d_2.
\end{cases}
\end{align*}
\end{proof}

\subsection{The proof of Theorem \ref{main}}

Theorem \ref{main} directly follows from Proposition
\ref{prop:commutator_relation_n=2} and Proposition
\ref{prop:commutator_relation_n=3} as

\begin{align*}
  [[e_{k+1},e_{k-1}],e_k]&= [e_{k-1,k+1}+e_{k,k},e_k] \\
                         &= e_{k-1,k,k+1}-e_{k-1,k,k+1}\\
  &=0.
\end{align*}

\bibliographystyle{plain}
\bibliography{write-up}

\begin{thebibliography}{10}

\bibitem{chriss2009representation}
Neil Chriss and Victor Ginzburg.
\newblock {\em Representation theory and complex geometry}.
\newblock Modern Birkh\"{a}user Classics. Birkh\"{a}user Boston, Ltd., Boston,
  MA, 2010.
\newblock Reprint of the 1997 edition.

\bibitem{diaconescu2020mckay}
Duiliu-Emanuel Diaconescu, Mauro Porta, and Francesco Sala.
\newblock Mckay correspondence, cohomological hall algebras and
  categorification.
\newblock {\em arXiv preprint arXiv:2004.13685}, 2020.

\bibitem{kapranov2019cohomological}
Mikhail Kapranov and Eric Vasserot.
\newblock The cohomological hall algebra of a surface and factorization
  cohomology.
\newblock {\em arXiv preprint arXiv:1901.07641}, 2019.

\bibitem{L04}
Robert Lazarsfeld.
\newblock {\em Positivity in algebraic geometry. {I}}, volume~48 of {\em
  Ergebnisse der Mathematik und ihrer Grenzgebiete. 3. Folge. A Series of
  Modern Surveys in Mathematics [Results in Mathematics and Related Areas. 3rd
  Series. A Series of Modern Surveys in Mathematics]}.
\newblock Springer-Verlag, Berlin, 2004.

\bibitem{minets18:cohom_hall_higgs}
Alexandre Minets.
\newblock Cohomological hall algebras for higgs torsion sheaves, moduli of
  triples and sheaves on surfaces.
\newblock {\em Selecta Mathematica}, 26(2):30, Apr 2020.

\bibitem{neguct2018hecke}
Andrei Negu{\c{t}}.
\newblock Hecke correspondences for smooth moduli spaces of sheaves.
\newblock {\em arXiv preprint arXiv:1804.03645}, 2018.

\bibitem{porta2019categorification}
Mauro Porta and Francesco Sala.
\newblock Two-dimensional categorified {H}all algebras.
\newblock {\em arXiv preprint arXiv:1903.07253}, 2019.

\bibitem{sala2018cohomological}
Francesco Sala and Olivier Schiffmann.
\newblock Cohomological hall algebra of higgs sheaves on a curve.
\newblock {\em Algebraic Geometry}, 7(3):346--376, 2018.

\bibitem{schiffmann2013}
Olivier Schiffmann and Eric Vasserot.
\newblock The elliptic hall algebra and the $k$ -theory of the hilbert scheme
  of $\mathbb{A}^{2}$.
\newblock {\em Duke Math. J.}, 162(2):279--366, 02 2013.

\bibitem{Toda2020hall}
Yukinobu Toda.
\newblock Hall-type algebras for categorical donaldson--thomas theories on
  local surfaces.
\newblock {\em Selecta Mathematica}, 26(4):64, Sep 2020.

\bibitem{zhao2019k}
Yu~Zhao.
\newblock {On the K-Theoretic Hall Algebra of a Surface}.
\newblock {\em International Mathematics Research Notices}, 07 2020.
\newblock rnaa123.

\end{thebibliography}
\end{document}